\newtheorem{thm}{Theorem}
\newtheorem{theorem}[thm]{Theorem}
\newtheorem{proposition}{Proposition}
\newtheorem{example}{Example}
\newtheorem{definition}{Definition}
\newtheorem{problem}{Problem}
\newtheorem{conjecture}{Conjecture}
\theoremstyle{remark}
\newtheorem{remark}{Remark}
\def\Cone{{\rm Cone}}
\def\Spec{{\rm Spec}}
\def\a{{\mathbf{a}}}
\def\e{{\mathbf{e}}}
\def\x{{\mathbf{x}}}
\def\X{{\mathbf{X}}}
\def\y{{\mathbf{y}}}
\def\z{{\mathbf{z}}}
\def\Y{{\mathbf{Y}}}
\def\W{{\mathbf{W}}}
\def\v{{\mathbf{v}}}
\def\V{{\mathbf{V}}}
\def\Int{{\rm Int}}
\def\R{{\mathbb R}}
\def\bM{{\mathcal{M}}}
\def\PGL{{\rm PGL}}
\def\Aut{{\rm Aut}}
\def\dlog{{\rm dlog}}
\def\Vol{{\rm Vol}}
\def\C{{\mathbb C}}
\def\Z{{\mathbb Z}}
\def\Gr{{\mathrm{Gr}}}
\def\pt{{\mathrm{pt}}}
\def\H{{\mathcal{H}}}
\def\RR{{\mathcal{R}}}
\def\PP{{\mathbf{P}}}
\renewcommand\P{{\mathbb P}}
\def\Res{{\rm Res}}
\def\oPi{\mathring{\Pi}}
\def\w{{\mathbf{w}}}
\def\wo{{\mathbf{w_0}}}
\def\t{{\mathbf{t}}}
\author{Thomas Lam}
\title{An invitation to positive geometries}
\begin{document}

\begin{abstract}
This short introduction to positive geometries, targeted at a mathematical audience, is based on my talk at OPAC 2022.
\end{abstract}

\maketitle

Positive geometries are certain semialgebraic spaces, equipped with a distinguished meromorphic form called the canonical form \cite{ABL}.  Examples of positive geometries include polytopes, positive parts of toric varieties, totally nonnegative spaces, and Grassmann polytopes and amplituhedra.

Positive geometries were first introduced in theoretical physics.  Canonical forms of positive geometries are used to write formulae for scattering amplitudes, analytic functions that are used to compute probabilities in particle scattering experiments.  Roughly speaking, different positive geometries correspond to different quantum field theories.  There is a flourishing and well-developed industry expanding the zoo of positive geometries and related physical processes.  We refer the reader to the recent surveys \cite{FL,HT} for more on the physics of positive geometries, and to \cite{EH,HP} for textbook introductions to scattering amplitudes.  

The mathematical study of positive geometries is still in its infancy.  In this short note, we give a brief introduction to positive geometries, with a mathematical audience, especially an algebraic or geometric combinatorialist, in mind.  The course webpage \cite{Lamcourse} contains many additional references for further exploration.

We apologize for the multiple perspectives, especially those from physics, that we do not mention.  We thank the OPAC organizers, Christine Berkesch, Ben Brubaker, Gregg Musiker, Pavlo Pylyavskyy, and Vic Reiner for the invitation to speak.  We thank the National Science Foundation for support under grant DMS-1953852.

\section{The polytope canonical form}\label{sec:poly}
We begin by illustrating the main ideas with the example of convex projective polytopes.  A \emph{projective polytope} is a subspace $P \subset \P^d(\R)$ of projective space such that there exists a hyperplane $H \subset \P^d$ and a linear identification $\P^d \setminus H \cong \R^d$ such that $P \subset \P^d \setminus H \cong \R^d$ is a Euclidean convex polytope.  An orientation of a projective polytope is an orientation of its interior $\Int(P)$, which is always a manifold.  The following theorem is a reformulation of the statement that polytopes are positive geometries \cite[Section 6]{ABL}.

\begin{thm}[Residue definition]\label{thm:Pdef}
For each full-dimensional oriented projective polytope $P \subset \P^d$ there exists a rational $d$-form $\Omega(P)$ on $\P^d$, with poles only along facet hyperplanes, and these poles are simple, and such that $\Omega(P)$ is uniquely determined by the recursive properties:
\begin{enumerate}
\item the canonical form of a point $\pt = \P^0$ is $\Omega(\pt)=\pm 1$ depending on the orientation,
\item for any facet $F \subset P$, we have $\Res_H \Omega(P) = \Omega(F)$, where $H$ is the hyperplane spanned by the facet $F$.
\end{enumerate}
\end{thm}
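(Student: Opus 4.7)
The plan is to separate uniqueness from existence and prove each by induction on $d$, with the base case $d=0$ given directly by property (1).

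\emph{Uniqueness.} Suppose $\Omega_1,\Omega_2$ both satisfy the conditions. By the inductive hypothesis applied to each facet $F$ of $P$, the form $\Omega(F)$ is unique, so $\Res_H(\Omega_1-\Omega_2)=0$ along every facet hyperplane $H$. Then $\Omega_1-\Omega_2$ is a rational $d$-form on $\P^d$ whose only possible poles are simple, along finitely many hyperplanes, with vanishing residues, so the form is regular on all of $\P^d$. Since any regular $d$-form on $\P^d$ is identically zero, $\Omega_1=\Omega_2$.

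\emph{Existence.} First treat a simplex $\Delta$. Choose affine coordinates on $\P^d \setminus H_\infty \cong \R^d$ so that $\Delta$ is the standard simplex; then
\[
\Omega(\Delta) \;=\; \frac{dx_1\wedge\cdots\wedge dx_d}{x_1\, x_2\cdots x_d\,(1-x_1-\cdots-x_d)}
\]
extends to a rational $d$-form on $\P^d$ with simple poles exactly along the facet hyperplanes, and a direct computation verifies the residue recursion. For an arbitrary simplex, pull this back by a projective transformation in $\PGL_{d+1}$.

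For a general polytope $P$, choose a triangulation $P = \bigcup_j \Delta_j$ (for instance, one whose vertices are among the vertices of $P$), orient each $\Delta_j$ compatibly with the orientation of $P$, and set $\Omega(P) := \sum_j \Omega(\Delta_j)$. The possible poles of this sum lie on hyperplanes spanned by facets of some $\Delta_j$, which are either (a) facet hyperplanes of $P$ or (b) internal walls of the triangulation. Each internal wall $W$ is shared by exactly two simplices whose induced orientations on $W$ are opposite, so by the simplex case the two residues along $W$ cancel. Hence $\Omega(P)$ has simple poles only along facet hyperplanes of $P$. For a facet $F = P \cap H$, the simplices meeting $H$ restrict to a triangulation of $F$ whose orientations are induced from $F$, and by the inductive hypothesis the sum of their residues on $H$ is $\Omega(F)$.

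The main obstacle is the bookkeeping in the triangulation step: checking the orientation signs carefully enough to see that internal walls cancel in pairs and that on each true facet hyperplane the residues assemble into $\Omega(F)$. Once $\Omega(P)$ is shown to satisfy properties (1) and (2), the uniqueness half immediately implies that the construction is independent of the chosen triangulation, completing the proof.
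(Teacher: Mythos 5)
Your proposal is correct and follows essentially the same route the paper sketches: uniqueness from the absence of nonzero holomorphic $d$-forms on $\P^d$ (after observing that vanishing residues kill the simple poles of the difference), and existence by triangulating into simplices, using the explicit form for the standard simplex together with the additivity/cancellation of residues on internal walls (\cref{thm:Ptriang}). The orientation bookkeeping you flag is indeed the only remaining routine verification.
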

The differential form $\Omega(P)$ is called the \emph{canonical form} of $P$.  The notation $\Res_H \Omega(P)$ denotes a residue, defined in \eqref{eq:resdef}; we illustrate the computation in some examples below.  

In the following examples, if $P \subset \R^d$ is a Euclidean polytope, then we consider it a projective polytope via the natural inclusion $\R^d =\{(1:\x) \mid \x \in \R^d\} \subset \P^d$.  Let $P = [a,b] \subset \R \subset \P^1(\R)$ be a closed interval with one of the two orientations.  Then the canonical form is (up to a sign) given by
\begin{equation}\label{eq:interval}
\Omega([a,b]) = \frac{dx}{x-a}-\frac{dx}{x-b},
\end{equation}
which has residue $+1$ at $x=a$ and $-1$ at $x=b$.

\begin{example}\label{ex:main}
Let $P$ be the quadrilateral with vertices $(0,0), (2,0), (0,1),(1,2)$ in $\R^2$ (\cref{fig:ex}(a)).  Then 
\begin{equation}\label{eq:P}
\Omega(P) = \frac{4+4x-y}{xy(1+x-y)(4-2x-y)} dx dy.
\end{equation}
As required, $\Omega(P)$ has simple poles along each of the four facet hyperplanes
\begin{equation}\label{eq:facets}
x=0, \qquad y=0, \qquad 1+x-y=0, \qquad 4-2x-y=0
\end{equation}
of $P$.  The numerator will be explained in \cref{thm:Padjoint} below.  Let us first take the residue along $x=0$.  Since
$$
\Omega(P) = \frac{dx}{x} \wedge \frac{4+4x-y}{y(1+x-y)(4-2x-y)} dy,
$$
we have, using \eqref{eq:interval},
$$
\Res_{x=0}(\Omega(P))= \frac{4-y}{y(1-y)(4-y)}dy = \frac{1}{y(1-y)}dy= \frac{1}{y} dy - \frac{1}{y-1}dy = \Omega([0,1]).
$$
Now, let us take the residue along the hyperplane $H=\{f=1+x-y=0\}$.  Since $df/f = (dx-dy)/(1+x-y)$, we can write
$$
\Omega(P) = \frac{df}{f} \wedge \frac{4+4x-y}{xy(4-2x-y)} dx,
$$
Making the substitution $y=x+1$, we get
$$
\Res_{H}(\Omega(P))= \left( \frac{4+4x-y}{xy(4-2x-y)} dx \right)_{y \mapsto 1+x}=\frac{3+3x}{x(x+1)(3-3x)} dx = \frac{1}{x(1-x)}dx,
$$
which again by \eqref{eq:interval} is the canonical form of the edge connecting the vertices $(0,1)$ and $(1,2)$.  We invite the reader to verify that the residues along the two other facets are correct.
\end{example}

\begin{figure}
  \includegraphics[width=1.0\textwidth]{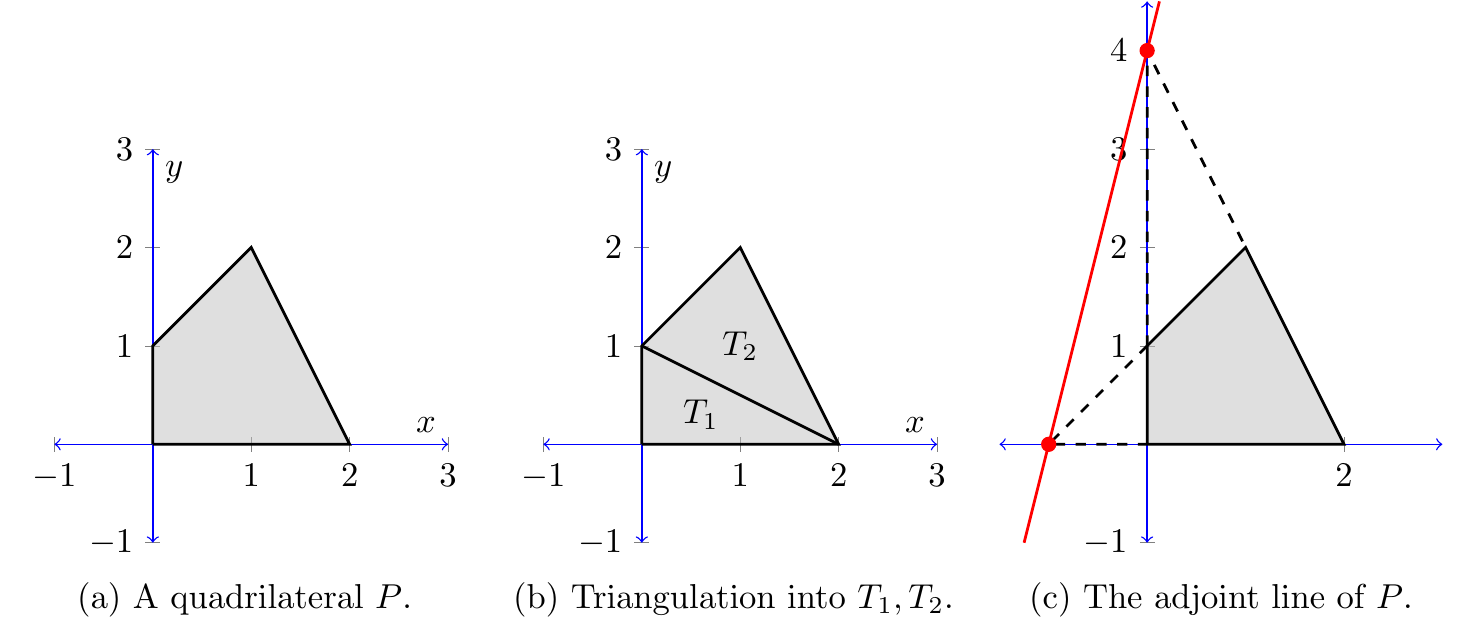}
 \caption{\label{fig:ex} Our running example.}
\end{figure}

Note that changing the orientation of $P$ negates the canonical form $\Omega(P)$.  Henceforth, we omit the adjective ``oriented" from our theorem statements, implicitly assuming that our polytopes have been equipped with (compatible) orientations.

The uniqueness of $\Omega(P)$ follows from the fact that $\P^d$ has no nonzero holomorphic $d$-forms.  The quickest proof of the existence of $\Omega(P)$ is via triangulations (\cite[Section 3]{ABL}).

\begin{thm}[Additivity for subdivisions]\label{thm:Ptriang}
Suppose that a projective polytope $P$ is subdivided into polytopes $T_1,T_2,\ldots,T_r$.  Then
$$
\Omega(P) = \sum_{i=1}^r \Omega(T_r).
$$
\end{thm}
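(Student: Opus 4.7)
The plan is to invoke the uniqueness assertion of \cref{thm:Pdef} and to argue by induction on the dimension $d$. Set $\Omega' := \sum_{i=1}^r \Omega(T_i)$. It suffices to show that $\Omega'$ is a rational $d$-form with at worst simple poles along the facet hyperplanes of $P$ and that it satisfies the residue recursion (1)--(2); uniqueness then forces $\Omega' = \Omega(P)$. The base case $d=0$ is vacuous, since a point admits only the trivial subdivision into itself.

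For the inductive step, each $\Omega(T_i)$ is a rational $d$-form whose poles are simple and lie along the facet hyperplanes of $T_i$. Every such hyperplane is of one of two types: either it supports an interior wall of the subdivision, shared by exactly two adjacent cells, or it coincides with a facet hyperplane of $P$. Along an interior wall $H$ shared by cells $T_i$ and $T_j$, the induced orientations on the common facet from the two sides are opposite, so the residue property for $\Omega(T_i)$ and $\Omega(T_j)$ gives $\Res_H \Omega(T_i) = -\Res_H \Omega(T_j)$; the simple poles of the two summands along $H$ therefore cancel in $\Omega'$, and $\Omega'$ extends holomorphically across $H$. Consequently the poles of $\Omega'$ are confined to facet hyperplanes of $P$ and are simple.

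Finally, let $H$ be a facet hyperplane of $P$ supporting the facet $F \subset P$. Those cells $T_i$ whose intersection with $H$ is a facet of $T_i$ induce a coherently oriented polytopal subdivision of $F$ into pieces $F_{i_1},\ldots,F_{i_k}$. Applying the residue property to each $T_{i_j}$ and then the inductive hypothesis to the $(d-1)$-dimensional subdivision of $F$ yields
\[
\Res_H \Omega' \;=\; \sum_{j=1}^k \Res_H \Omega(T_{i_j}) \;=\; \sum_{j=1}^k \Omega(F_{i_j}) \;=\; \Omega(F),
\]
which is precisely the recursion required by \cref{thm:Pdef}. Uniqueness then gives $\Omega' = \Omega(P)$.

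The main obstacle, as is typical for additivity statements of this kind, is the orientation bookkeeping. One must verify that the two cells meeting at any interior wall endow that wall with opposite orientations (so that residues cancel) and that the cells meeting a boundary facet $F$ of $P$ induce on $F$ a coherently oriented subdivision compatible with the ambient orientation of $\Int(F)$. Both assertions are standard facts about coherent orientations on an oriented manifold-with-corners decomposed face-to-face into polytopes, but they are the only place where genuinely geometric --- as opposed to purely algebraic --- input enters.
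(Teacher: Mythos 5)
Your overall strategy --- showing that $\Omega' := \sum_i \Omega(T_i)$ satisfies the residue recursion of \cref{thm:Pdef} and then invoking uniqueness --- is the right one, and it is essentially the argument of the reference the paper cites for this statement ([ABL, Section~3]); the paper itself does not reprove \cref{thm:Ptriang}. Your treatment of the boundary facets, with the induction on dimension applied to the induced subdivision of $F$, is correct.

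The step that needs repair is the interior-wall cancellation. In a general polytopal subdivision, a hyperplane $H$ meeting $\Int(P)$ is not ``shared by exactly two adjacent cells'': $H$ may support facets of many cells, and the facets contributed by cells lying on the positive side of $H$ need not match one-to-one with those contributed by cells on the negative side (for instance, one long facet on one side can face two shorter facets on the other). So the pairwise identity $\Res_H \Omega(T_i) = -\Res_H \Omega(T_j)$ does not literally apply. What is true is that the facets on the two sides of $H$ form two polytopal subdivisions of the same $(d-1)$-dimensional region $W \subset H$ (every point of $H \cap \Int(P)$ lying in the relative interior of a facet of some cell on one side is also covered by a facet of a cell on the other side), and the induced orientations from the two sides are opposite. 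By the inductive hypothesis --- additivity in dimension $d-1$, applied to each side and with $\Omega$ of a union of polytopes interpreted as the sum over its pieces --- each side contributes $\pm\Omega(W)$, so $\Res_H \Omega' = 0$; since each summand has at most a simple pole along $H$, this forces $\Omega'$ to be regular along $H$. In other words, the same inductive mechanism you already deploy at the boundary facets is also needed at the interior walls. With that adjustment the proof is complete.
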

Since every polytope has a triangulation, to construct canonical forms, it suffices to construct the canonical form of a simplex.  Let 
$$
\Delta^d := \{(X_0:X_1:\cdots:X_d) \in \P^d \mid X_i \geq 0\} \subset \P^d
$$
denote the standard projective $d$-simplex.  Every face of $\Delta^d$ is again a standard simplex, so it is easy to verify that 
$$
\Omega(\Delta^d) = \frac{dx_1}{x_1} \wedge \cdots \wedge \frac{dx_d}{x_d}, \qquad x_i:=\frac{X_i}{X_0}
$$
satisfies the recursion in \cref{thm:Pdef}.

\begin{example} Let $P$ be as in Example~\ref{ex:main}.  Then $P$ can be triangulated as in \cref{fig:ex}(b).  The two canonical forms are 
  \begin{equation}
    \label{eq:triang}
    \Omega(T_1) = \frac{2}{xy(2-x-2y)}dxdy, \qquad \Omega(T_2) = \frac{9}{(1+x-y)(4-2x-y)(2-x-2y)}dxdy
  \end{equation}
which can be calculated by finding a projective transformation $g \in \PGL(3) \cong \Aut(\P^2)$ sending $T_1$ (or $T_2$) to the standard simplex $\Delta^2$ and computing the pullback $g^*(\Omega(\Delta^2))$.  More explicitly, suppose the triangle $T$ has facets $a_i+b_ix+c_iy=0$ for $i =1,2,3$.  Then letting $g$ be the $3 \times 3$ matrix with columns $(a_i,b_i,c_i)$, we have 
$$
\Omega(T)=\pm \frac{\det(g)}{\prod_i (a_i+b_ix+c_iy)}.
$$
It is easily verified that indeed $\Omega(T_1)+\Omega(T_2)$ recovers $\Omega(P)$ in \eqref{eq:P}, in agreement with \cref{thm:Ptriang}.
\end{example}

We give an explicit formula for $\Omega(P)$.  For a subset $S \subset \R^d$, the polar set $S^\vee$ is defined by
$$
S^\vee:= \{\x \in \R^d \mid \x.\y \geq -1 \text{ for all } \y \in S\}.
$$

\begin{thm}[Dual volume {\cite[Section 7.4]{ABL}}]\label{thm:Pdual} Suppose $P \subset \R^d \subset \P^d$ is full-dimensional.  Then 
\begin{equation}\label{eq:Pdual}
\Omega(P)(\x) = \Vol((P-\x)^\vee) d^d\x.
\end{equation}
where $\x \in \Int(P)$.  Here, $\Vol$ denotes the Euclidean volume, normalized so that the unit simplex has volume $1$.
\end{thm}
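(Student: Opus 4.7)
The plan is to appeal to the uniqueness portion of \cref{thm:Pdef}. Set $\Psi(P) := \Vol((P-\x)^\vee)\,d^d\x$. It suffices to check that $\Psi(P)$ is a rational $d$-form on $\P^d$ with at worst simple poles along the facet hyperplanes of $P$, that $\Res_{H}\Psi(P) = \Psi(F)$ for each facet $F \subset H$, and that $\Psi(\pt) = 1$; the uniqueness assertion then forces $\Psi(P) = \Omega(P)$.

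For the rationality and pole structure, write each facet of $P$ as $\{\y : \langle \a_i, \y\rangle \le b_i\}$, $i=1,\ldots,n$, and set $c_i(\x) := b_i - \langle \a_i, \x \rangle$, so that $\Int(P) = \{c_i > 0 \text{ for all } i\}$. Polar duality identifies $(P-\x)^\vee = \Conv(-\a_i/c_i(\x) : i=1,\ldots,n)$, with $0$ in the interior. Assuming $P$ is simple (the non-simple case is recovered by perturbation), each vertex $v$ of $P$ meets exactly $d$ facets indexed by $i_1(v),\ldots,i_d(v)$, and the facet of $(P-\x)^\vee$ dual to $v$ is the $(d-1)$-simplex on $-\a_{i_k(v)}/c_{i_k(v)}(\x)$. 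Triangulating $(P-\x)^\vee$ from the origin over its facets and summing pyramid volumes, with $\Vol$ simplex-normalized, gives
\[
\Psi(P) = \Bigg(\sum_{v} \frac{|\det(\a_{i_1(v)},\ldots,\a_{i_d(v)})|}{\prod_{k=1}^d c_{i_k(v)}(\x)}\Bigg)\,d^d\x,
\]
which is rational on $\P^d$ with only simple poles along the facet hyperplanes $H_i = \{c_i = 0\}$.

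For the residue, fix a facet $F_i \subset H_i$; only summands whose vertex $v$ lies on $F_i$ can contribute to $\Res_{H_i}$. Using $d^d\x = \pm |\a_i|^{-1}\,dc_i \wedge d^{d-1}\y$ for the Euclidean form $d^{d-1}\y$ on $H_i$,
\[
\Res_{H_i} \Psi(P) = \pm \sum_{v \in V(F_i)} \frac{|\det(\a_i,\a_{j_1(v)},\ldots,\a_{j_{d-1}(v)})|}{|\a_i|\,\prod_{k=1}^{d-1} c_{j_k(v)}(\y)}\,d^{d-1}\y,
\]
where $j_1(v),\ldots,j_{d-1}(v)$ index the facets of $P$ through $v$ other than $F_i$. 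Decomposing $\a_j = \tilde\a_j + \beta_j\a_i$ with $\tilde\a_j\perp\a_i$ yields the linear-algebra identity $|\det(\a_i,\a_{j_1},\ldots,\a_{j_{d-1}})|/|\a_i| = |\det_{H_i}(\tilde\a_{j_1},\ldots,\tilde\a_{j_{d-1}})|$, and for $\y\in H_i$ one has $c_j(\y) = \tilde c_j(\y)$, the intrinsic defining form of the ridge $F_i\cap F_j$ within $H_i$. Substituting, the displayed sum becomes the same pyramid-volume expression one dimension down for $(F_i-\y)^\vee$, and equals $\Psi(F_i)$. The base case $\Psi(\pt) = 1$ is immediate, so by \cref{thm:Pdef} we conclude $\Psi(P) = \Omega(P)$.

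The main obstacle is this residue step: tracking signs (from orientations and from the factorization of $d^d\x$), and executing the linear-algebraic matching that identifies $|\det(\a_i,\a_{j_1},\ldots,\a_{j_{d-1}})|/|\a_i|$ with the intrinsic determinant on $H_i$ and $c_j|_{H_i}$ with the ridge equation. The non-simple case adds only a further triangulation of each facet of $(P-\x)^\vee$ and contributes no conceptual difficulty; alternatively, it can be handled by continuity from the simple case.
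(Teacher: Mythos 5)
Your overall strategy---checking that $\Psi(P):=\Vol((P-\x)^\vee)\,d^d\x$ satisfies the characterization of \cref{thm:Pdef} and invoking uniqueness---is legitimate, and your pyramid decomposition of $(P-\x)^\vee$ over the vertices of $P$, together with the linear-algebra matching in the residue step ($c_j|_{H_i}=\tilde c_j$ and $|\det(\a_i,\a_{j_1},\ldots,\a_{j_{d-1}})|/|\a_i|$ equal to the intrinsic determinant), is correct; it is exactly the decomposition shown in the example following the theorem. For context: the survey gives no proof of this statement, deferring to \cite[Section 7.4]{ABL}, where the argument instead triangulates $P$ into simplices and uses additivity of both sides (\cref{thm:Ptriang} for $\Omega$, and a dual additivity for the dual volume), so your residue-recursion route is genuinely different.

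There is, however, a real gap in your verification that $\Psi(P)$ ``is rational on $\P^d$ with only simple poles along the facet hyperplanes.'' Each summand
$$
\frac{|\det(\a_{i_1(v)},\ldots,\a_{i_d(v)})|}{\prod_{k} c_{i_k(v)}(\x)}\,d^d\x,
$$
viewed on $\P^d$, is the canonical form of the tangent cone of $P$ at $v$, a projective simplex with one facet on the hyperplane at infinity; it therefore has a simple pole there. For the sum to be pole-free at infinity one needs the degree-$(-d)$ leading parts to cancel, i.e.\ $\sum_v |\det(\a_{i_1(v)},\ldots,\a_{i_d(v)})|\,/\prod_k\langle\a_{i_k(v)},\x\rangle=0$, a nontrivial Brianchon--Gram-type identity that you assert implicitly but never prove. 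This cannot be skipped: once the residues along the finite facet hyperplanes match, the difference $\Psi(P)-\Omega(P)$ equals $g(\x)\,d^d\x$ for a polynomial $g$, and every nonzero form of this shape has a pole along the hyperplane at infinity---so ruling out that pole is logically equivalent to the equality you are trying to establish, and the uniqueness clause of \cref{thm:Pdef} cannot be applied without it. A secondary issue is the deferred sign bookkeeping: since any form with simple poles along the facets has residues equal to $\pm$ the facet forms, fixing the sign as $+$ for the induced orientations is the actual content of the recursion; at minimum, note that $\Vol\geq 0$ on $\Int(P)$ pins down the global sign, and then the signs must be propagated consistently through the factorization $d^d\x=\pm|\a_i|^{-1}\,dc_i\wedge d^{d-1}\y$ in the induction.
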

The function $\Vol((P-\x)^\vee)$ is defined when $\x \in \Int(P)$. It analytically continues to a rational function on $\R^d$.
While \eqref{eq:Pdual} depends on (compatible) choices of inner product and measure, we emphasize that the definition (Theorem~\ref{thm:Pdef}) does not depend on any metric notions.

\begin{example}
The polar polytope $(P-x)^\vee$ is illustrated in \cref{fig:polar}.  The vertices of $(P-x)^\vee$ lie on the rays of the (inner) normal fan of $P$.  Intersecting $(P-x)^\vee$ with each maximal cone of the normal fan we obtain a triangle.  Summing the volumes of these triangles gives
$$
\frac{1}{xy} + \frac{1}{1+x-y}+ \frac{3}{(4-2x-y)(x+1-y)} + \frac{2}{y(4-2x-y)} = \frac{4+4x-y}{xy(1+x-y)(4-2x-y)},
$$
agreeing with \cref{thm:Pdual}.
\end{example}

\begin{figure}
  \includegraphics[width=0.7\textwidth]{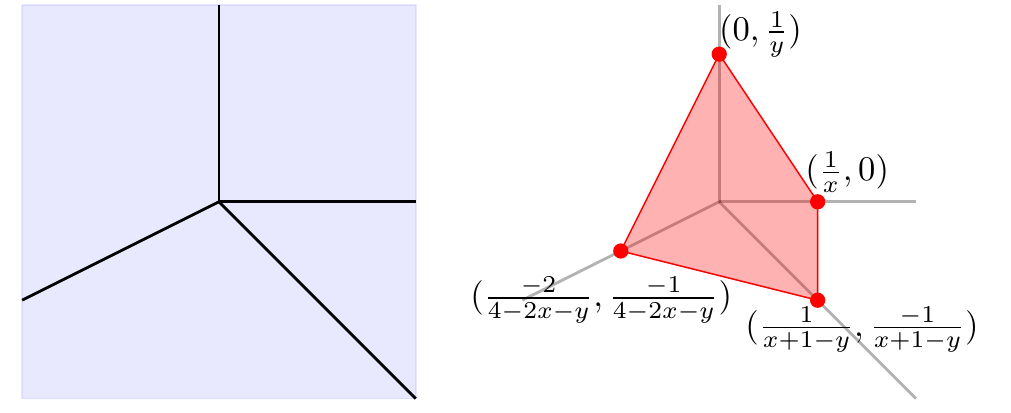}
 \caption{\label{fig:polar} Left: the normal fan of $P$. \;\; Right: the polar polytope $(P-(x,y))^\vee$.}
\end{figure}

\begin{remark}
The rational function $\Vol((P-\x)^\vee)$ is a special case of the \emph{dual mixed volume} function.  Let $P_1,P_2,\ldots,P_r \subset \R^d$ be Euclidean polytopes.  The \emph{mixed volume} is the polynomial
$$
V_\PP(\x):= \Vol(x_1 P_1+ \cdots + x_r P_r)
$$ 
where $P+Q$ denotes the Minkowski sum of polytopes $P$ and $Q$.  We define the \emph{dual mixed volume} function by
$$
V_\PP^\vee(\x):= \Vol((x_1 P_1+ \cdots + x_r P_r)^\vee).
$$ 
As before, this is defined when $0 \in \Int(x_1 P_1+ \cdots + x_r P_r)$ and extended by analytic continuation.  The function $V^\vee(\x)$ is a rational function.   Choosing $P_i = - \e_i$ and $P_{d+1} = P$, and setting $x_{d+1} = 1$, one sees that $\Vol((P-\x)^\vee)$ is a special case of $V^\vee(\x)$.  See also \cite{AHL} for an appearance of the dual mixed volume function.
\end{remark}

Define the $d$-form $ \langle \X d^d \X \rangle$ on $\R^{d+1}$,
$$
 \langle \X d^d \X \rangle := d! \sum_{i=0}^d (-1)^i X_i dX_0 \wedge \cdots \wedge \widehat{dX_i} \wedge \cdots \wedge dX_d.
$$
Under the tautological rational map $\tau:\R^{d+1} \dashrightarrow \P^d$, the pullback $\tau^*(\Omega)$ of a rational $d$-form $\Omega$ on $\P^d$ can be written as $\tau^*(\Omega) = \frac{P(\X)}{Q(\X)}  \langle \X d^d \X \rangle$, where $\frac{P(\X)}{Q(\X)}$ is a rational function, homogeneous of degree $-d-1$.  

For a cone $C \subset \R^{d+1}$, define the dual cone $C^\vee$ by 
$$
C^\vee = \{\x \in \R^{n+1} \mid \x.\y \geq 0 \text{ for all } \y \in C\}.
$$

\begin{thm}[Laplace transform {\cite[Section 7.4]{ABL}}]\label{thm:PLaplace} 
Let $P \subset \P^d$ be a projective polytope, and let $C(P) \subset \R^{d+1}$ be the (pointed, convex, polyhedral) cone over $P$.  Then
$$
\tau^*\Omega(P) = \frac{1}{d!} \left(\int_{C^\vee} e^{-\X.\Y} d^{d+1}\Y\right) \langle \X d^d \X \rangle
$$
where $C^\vee \subset \R^{d+1}$ is the dual cone, and $\X,\Y$ are vectors in $\R^{d+1}$.  Here, the integral converges when $\X \in C(P)$, and is extended by analytic continuation to $\R^{d+1}$.
\end{thm}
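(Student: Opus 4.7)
I would deduce the Laplace-transform identity from the dual-volume formula (Theorem~\ref{thm:Pdual}) by reshaping the integral over $C(P)^\vee$ into a classical support-function integral.

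\textbf{Step 1: pulling back the volume form.} Writing $x_i = X_i/X_0$ on the affine chart $X_0=1$ and expanding $dx_i = (dX_i - x_i\, dX_0)/X_0$, a short calculation (a signed wedge expansion in which every term with two or more factors of $dX_0$ vanishes) gives
\begin{equation*}
\tau^*(d^d\x) \;=\; \frac{\langle \X\, d^d \X\rangle}{d!\, X_0^{d+1}}.
\end{equation*}
Combined with Theorem~\ref{thm:Pdual}, this reduces the statement to the scalar identity
\begin{equation*}
\int_{C(P)^\vee} e^{-\X.\Y}\, d^{d+1}\Y \;=\; \frac{\Vol((P-\x)^\vee)}{X_0^{d+1}}, \qquad \x = \X'/X_0,
\end{equation*}
as rational functions of $\X \in \R^{d+1}$. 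The left side is manifestly homogeneous of degree $-(d+1)$ in $\X$ via $\Y \mapsto \Y/X_0$, so it suffices to treat $X_0=1$, $\X = (1,\x)$ with $\x \in \Int(P)$, after which both sides extend by analytic continuation.

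\textbf{Step 2: describing the dual cone and integrating out $Y_0$.} Viewing $P$ inside the hyperplane $\{X_0=1\}$ one reads off
\begin{equation*}
C(P)^\vee \;=\; \{(Y_0,\Y') : Y_0 + \Y'.\y \geq 0 \text{ for all } \y \in P\} \;=\; \{(Y_0,\Y') : Y_0 \geq h_{-P}(\Y')\},
\end{equation*}
where $h_Q(\W) = \sup_{\z \in Q} \z.\W$ is the support function. Performing the $Y_0$-integral first and then translating $\Y' \mapsto \W$ converts the integral into
\begin{equation*}
\int_{C(P)^\vee} e^{-(1,\x).\Y}\, d^{d+1}\Y \;=\; \int_{\R^d} e^{-h_{P-\x}(\W)}\, d^d\W.
\end{equation*}

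\textbf{Step 3: the classical support-function identity.} Apply
\begin{equation*}
\int_{\R^d} e^{-h_K(\W)}\, d^d\W \;=\; \Vol(K^\vee),
\end{equation*}
valid for any convex body $K$ with $0 \in \Int(K)$, under the paper's normalization in which the unit simplex has volume $1$. Taking $K = P-\x$ yields exactly the required identity, and the theorem follows.

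\textbf{Main obstacle.} The substantive ingredient is the support-function identity of Step~3. I would prove it by decomposing $\R^d$ into the maximal cones $\sigma_v$ of the outer normal fan of $K = P-\x$: on $\sigma_v$ the support function is the linear form $\W \mapsto v.\W$, where $v$ ranges over the vertices of $K$, so
\begin{equation*}
\int_{\R^d} e^{-h_K(\W)}\, d^d\W \;=\; \sum_v \int_{\sigma_v} e^{-v.\W}\, d^d\W.
\end{equation*}
Each summand is an elementary Laplace transform of a pointed simplicial cone, which by the standard formula $\int_{\Cone(r_1,\ldots,r_d)} e^{-v.\W}\, d^d\W = |\det(r_1,\ldots,r_d)|/\prod_i (v.r_i)$ equals (in the paper's normalization) the volume of the slice of $K^\vee$ sitting over $\sigma_v$---precisely the vertex-cone triangulation of $(P-\x)^\vee$ illustrated in Figure~\ref{fig:polar} and used in the calculation immediately following Theorem~\ref{thm:Pdual}. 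Summing over vertices $v$ of $K$ reassembles $\Vol(K^\vee)$, with the factor of $d!$ in $\langle \X\, d^d \X\rangle$ absorbing the passage between Euclidean and simplex-normalized volume.
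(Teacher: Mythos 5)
Your proof is correct. The paper itself does not prove \cref{thm:PLaplace} (it defers to \cite[Section 7.4]{ABL} and only verifies the formula on the running example by triangulating the dual cone), so there is no in-text argument to compare against; but your reduction to \cref{thm:Pdual} is the natural and standard derivation, and each step checks out: the pullback identity $\tau^*(d^d\x) = \langle \X\, d^d\X\rangle / (d!\,X_0^{d+1})$ is right, degree $-(d+1)$ homogeneity justifies setting $X_0=1$, and integrating out $Y_0$ over $\{Y_0 \geq h_{-P}(\Y')\}$ produces $\int_{\R^d} e^{-(h_{-P}(\Y')+\x.\Y')}\,d^d\Y'$, whose exponent is $h_{-(P-\x)}(\Y')$. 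Two cosmetic points: the passage to $\W$ in Step 2 is the reflection $\W = -\Y'$ (after absorbing the $e^{-\x.\Y'}$ factor into the support function), not a translation, and since the paper's polar $K^\vee$ uses $\x.\y \geq -1$ it equals $-K^\circ$, so the reflection is exactly what matches the classical identity $\int e^{-h_K} = d!\,\Vol_{\mathrm{Leb}}(K^\circ)$ to the paper's $\Vol(K^\vee)$ with simplex-normalized volume. In Step 3 the maximal normal cones $\sigma_v$ need not be simplicial when $P$ is not simple, so one should further triangulate each $\sigma_v$ before applying the formula $|\det(r_1,\ldots,r_d)|/\prod_i(v.r_i)$; this is routine and does not affect the argument. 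Your normal-fan decomposition is precisely the dual-cone triangulation the paper illustrates in \cref{ex:dual} and \cref{fig:polar}, so the whole proof is consistent with the computations the paper does display.
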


\begin{example}\label{ex:dual}
  Continuing \cref{ex:main}, the cone $C(P)$ over $P$ is given by
\begin{equation}\label{eq:cone}
C(P) = {\rm Cone}\left(\begin{bmatrix}1\\0\\0\end{bmatrix},\begin{bmatrix}1\\2\\0\end{bmatrix},\begin{bmatrix}1\\1\\2\end{bmatrix},\begin{bmatrix}1\\0\\1\end{bmatrix}\right)
\end{equation}
with dual cone
$$
C^\vee = {\rm Cone}\left(\a_1=\begin{bmatrix}0\\1\\0\end{bmatrix},\a_2=\begin{bmatrix}1\\1\\-1\end{bmatrix},\a_3=\begin{bmatrix}4\\-2\\-1\end{bmatrix},\a_4=\begin{bmatrix}0\\0\\1\end{bmatrix}\right)
$$
whose generators correspond to the four facets \eqref{eq:facets}.  The cone $C^\vee$ can be triangulated $C^\vee = C_1 \cup C_2$ into two simplicial cones $C_1 = \Cone(\a_1,\a_2,\a_4)$ and $C_2= \Cone(\a_2,\a_3,\a_4)$, and we compute that

\begin{align}
  \begin{split}\label{eq:Laplace}
\int_{C_1} e^{-\X.\Y} d^{d+1} \Y &= \frac{|\det(\a_1,\a_2,\a_4)|}{X_1(X_0+X_1-X_2)X_2} = \frac{1}{X_1(X_0+X_1-X_2)X_2} \\
\int_{C_2} e^{-\X.\Y} d^{d+1} \Y &= \frac{|\det(\a_2,\a_3,\a_4)|}{(X_0+X_1-X_2)(4X_0-2X_1-X_2)X_2} = \frac{6}{(X_0+X_1-X_2)(4X_0-2X_1-X_2)X_2}.
  \end{split}
\end{align}

Summing then specializing to $(X_0,X_1,X_2)=(1,x,y)$, we obtain \eqref{eq:P}, agreeing with \cref{thm:PLaplace}.
\end{example}
We remark that the calculation of \cref{ex:dual} comes from a triangulation of the dual cone (or dual polytope) and is qualitatively quite different from the one in \cref{thm:Pdual} (e.g., unlike \eqref{eq:triang}, both rational functions in \eqref{eq:Laplace} have poles belonging to the facets of $P$).  Indeed, the volume of a polytope $P$ can be obtained by triangulating $P$ or by triangulating the dual of $P$, a phenomenon sometimes known as Filliman duality~\cite{Filliman}.

\medskip

Let $P$ be a full-dimensional polytope in $\P^d$ with $f$ facets.  We define the \emph{adjoint hypersurface} $A_P \subset \P^d$ as follows.  Define $\H_P$ to be the projective hyperplane arrangement consisting of all facet hyperplanes of $P$.  We say that $\H_P$ is \emph{simple} if through any point in $\P^d$ pass at most $d$ hyperplanes of $\H_P$.  The \emph{residual arrangement} $\RR_P$ consists of all linear subspaces that are intersections of hyperplanes in $\H_P$ that do not contain faces of $P$.

When $\H_P$ is simple, Kohn and Ranestad \cite{KR} proved that there is a unique hypersurface $A_P$ in $\P^d$ of degree $f-d-1$ which vanishes along the residual arrangement $\RR_P$.  In general (for $P$ not necessarily simple), we write $P = \lim_{t \to 0} P_t$ for polytopes $P_t$ such that $\H_{P_t}$ is simple for $t >0$, and define the adjoint hypersurface $A_P:= \lim_{t \to 0} A_{P_t}$.  In the following, we abuse notation by also using $A_P$ to denote a polynomial whose vanishing set is the adjoint hypersurface.  See \cite{Warren} for a different definition of adjoint.

The following result connects the canonical form with adjoint hypersurfaces; see the learning seminar notes of Gaetz \cite{Gaetz}.
\begin{thm}[Zeros equal adjoint hypersurface]\label{thm:Padjoint}
Suppose $P \subset \R^d \subset \P^d$.  Then 
$$
\Omega(P) = \alpha \cdot \frac{A_P}{\prod_{\text{facets } H} H} d^d\x
$$
for some nonzero constant $\alpha$.
\end{thm}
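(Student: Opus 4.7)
The plan is to write $\Omega(P) = \frac{N(\x)}{\prod_H H(\x)} d^d\x$ and to pin down the numerator $N$ by two properties: it is a polynomial of degree $f - d - 1$, and it vanishes along every subspace in the residual arrangement $\RR_P$. The Kohn--Ranestad uniqueness of the adjoint then forces $N = \alpha A_P$.

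I would begin with the degree count. Since $\Omega(P)$ has only simple poles along the facet hyperplanes by \cref{thm:Pdef}, its denominator is $\prod_H H(\x)$, of degree $f$. The pullback $\tau^*\Omega(P)$ is homogeneous of degree $-d-1$ as a coefficient of $\langle \X d^d\X \rangle$, which shows that $N$ is the dehomogenisation of a homogeneous polynomial of degree $f - d - 1$. Using the limit definition of $A_P$ given in the excerpt and the rational dependence of $\Omega(P)$ on the defining data of $P$ (via \cref{thm:Ptriang} and the explicit formula for the canonical form of a simplex), it suffices to treat the case where $\H_P$ is simple.

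The main step is to show that $N$ vanishes on an arbitrary residual subspace $L = H_1 \cap \cdots \cap H_k$ of codimension $k$. I would compute the iterated residue of $\Omega(P)$ along the flag $H_1 \supset H_1 \cap H_2 \supset \cdots \supset L$ by induction, using the recursion of \cref{thm:Pdef}. At the $j$th step, either $F_j := P \cap H_1 \cap \cdots \cap H_j$ is a codimension-$j$ face of $P$, in which case the residue produces $\Omega(F_j)$; or else the hyperplane $H_1 \cap \cdots \cap H_j$ of $H_1 \cap \cdots \cap H_{j-1}$ is not a facet hyperplane of $F_{j-1}$, in which case $\Omega(F_{j-1})$ has no pole along it and the iterated residue becomes zero. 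Because $L$ contains no codimension-$k$ face of $P$, the chain must break at some step, so the full iterated residue vanishes. On the other hand, choosing coordinates $u_1, \ldots, u_d$ near a generic point of $L$ in which $u_i$ is a defining equation of $H_i$ for $i \leq k$, one computes that the same iterated residue equals, up to a nonzero Jacobian factor, the restriction of $N/\tilde D$ to $\{u_1 = \cdots = u_k = 0\}$ times $du_{k+1} \wedge \cdots \wedge du_d$, where $\tilde D$ is the product of the remaining facet forms. Simplicity and genericity ensure that $\tilde D$ does not vanish on a dense open subset of $L$, so the vanishing of this form forces $N|_L \equiv 0$.

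With $N$ a polynomial of degree $f - d - 1$ vanishing on $\RR_P$, the Kohn--Ranestad theorem identifies $N$ with $A_P$ up to a scalar, giving $N = \alpha A_P$ with $\alpha \neq 0$ since $\Omega(P) \not\equiv 0$. The main obstacle is the local residue computation: one must carefully track how the facet linear forms transform under the change to the $u$-coordinates, and verify that the resulting coordinate Jacobian contributes only a nonvanishing scalar, so that it neither absorbs nor creates factors of the numerator.
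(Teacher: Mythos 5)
Your proposal is correct in outline but follows a genuinely different route from the one the paper indicates. The paper's proof goes through \cref{thm:Ptriang} and \cref{thm:Pdual}: triangulating (the dual of) $P$ produces the numerator explicitly as Warren's adjoint, which Kohn--Ranestad identify with the adjoint defined via the residual arrangement. You instead characterize the numerator $N$ intrinsically --- degree $f-d-1$ from the $-d-1$ homogeneity of forms on $\P^d$, plus vanishing on $\RR_P$ extracted from iterated residues and the recursion of \cref{thm:Pdef} --- and then invoke the Kohn--Ranestad uniqueness to force $N = \alpha A_P$. Your main step is in fact exactly the polytope specialization of \cref{prop:adj}, whose proof in the paper runs the same two-way comparison of residues (recursion versus local coordinates) that you describe; so your argument is self-contained modulo that proposition, and it has the advantage of explaining \emph{why} the zero locus must be the adjoint, whereas the triangulation route is more computational but avoids the delicate bookkeeping of iterated residues. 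One point where you should be more careful: the paper's notion of ``simple'' (at most $d$ hyperplanes through any point) does not by itself prevent an intermediate flat $H_1 \cap \cdots \cap H_j$ from lying in additional facet hyperplanes, in which case the next residue in your flag is taken along a locus where $\tilde D$ vanishes identically and the pole need not be simple; you should perturb to a $P_t$ whose facet hyperplanes are in sufficiently general position (not merely simple) before running the flag argument, and also record why the perturbation argument preserves a \emph{nonzero} constant $\alpha$ in the limit (the numerators of $\Omega(P_t)$ converge to that of $\Omega(P) \neq 0$, and $A_{P_t} \to A_P$ by definition of the adjoint in the non-simple case).
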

\cref{thm:Padjoint} can be proved by triangulating $P$ and using \cref{thm:Pdual}. 

We remark that any rational form $\P^d$ has degree $-d-1$ since $\omega_{\P^d} \simeq {\mathcal O}_{\P^d}(-d-1)$.  Since all the poles of $\Omega(P)$ are linear, this agrees with the degree of $A_P$ being equal to $f-d-1$.

If $P$ is a simplex, then the adjoint hypersurface is empty, $A_P$ is a constant, and $\Omega(P)$ has no zeroes.

\begin{example} The rational 2-form \eqref{eq:P} has four poles, which are the four facets of $P$.  The numerator vanishes along the line connecting $(-1,0)$ to $(0,4)$, the adjoint hypersurface of $P$.  See \cref{fig:ex}(c).
\end{example}

Let $\pi: X \dashrightarrow Y$ be a rational map between complex algebraic varieties of the same dimension.  In the following, we shall use the notion of the \emph{pushforward} $\pi_* \omega$ of a rational differential form $\omega$ on $X$; see \cite[(4.1)]{ABL}.

\begin{thm}[Pushforward {\cite[Section 7.3]{ABL}}] \label{thm:Ppush} Suppose $P$ has vertices $\W_1,\W_2,\ldots,\W_m \in \P^d$ viewed as vectors in $\R^{d+1}$ in the following.  Let $\V_1=(1,\v_1),\V_2=(1,\v_2),\ldots,\V_m=(1,\v_m) \in \Z^{d+1}$ be integer vectors with first coordinate equal to $1$.  Assume that $\W_i$ and $\V_i$ have the same oriented matroid, that is, ${\rm sign} \det(\W_{i_1},\ldots,\W_{i_{d+1}}) = {\rm sign} \det(\V_{i_1},\ldots,\V_{d+1}) \in \{-1,0,1\}$ for all $\{i_1,\ldots,i_{d+1}\}$.  Define the rational map $\Phi: \P^d \to \P^d$ by
$$
\Phi: (1:\z) = (1:z_1: \cdots:z_d) \longmapsto \sum_{i=1}^m \z^{\v_i} \W_i \in \P^d.
$$
  Then 
$$
\Omega(P) = \Phi_*\Omega(\Delta^d).
$$
\end{thm}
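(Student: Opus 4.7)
The plan is to verify by induction on $d$ that $\omega := \Phi_*\Omega(\Delta^d)$ satisfies the defining properties of the canonical form from \cref{thm:Pdef}: simple poles supported on the facet hyperplanes of $P$, and $\Res_H\omega = \Omega(F)$ for each facet $F$ with spanning hyperplane $H$. Uniqueness then gives $\omega = \Omega(P)$; the base case $d=0$ is immediate since $\Phi$ is the identity there and $\Omega(\pt) = \pm 1$.

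First I would factor $\Phi = \pi \circ \phi$, where $\phi\colon\P^d\dashrightarrow\P^{m-1}$ is the monomial map $(1:\z)\mapsto(\z^{\v_1}:\cdots:\z^{\v_m})$ with image the projective toric variety $X_Q\subseteq\P^{m-1}$ associated to $Q := \Conv(\v_1,\ldots,\v_m)$, and $\pi\colon\P^{m-1}\dashrightarrow\P^d$ is the linear projection sending $\e_i\mapsto\W_i$. The oriented-matroid hypothesis says precisely that $Q$ and $P$ are combinatorially isomorphic via $\v_i\leftrightarrow\W_i$, so facets of $Q$ biject with facets of $P$ with matching vertex incidences. By functoriality, $\omega=\pi_*\phi_*\Omega(\Delta^d)$. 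A direct monomial calculation identifies $\phi_*\Omega(\Delta^d)$ with the canonical form of the positive toric variety $X_Q^{\ge 0}\subset\P^{m-1}$, whose poles lie along its facet divisors --- one per facet of $Q$, and hence one per facet of $P$.

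To carry out the induction, I would use that residues commute with pushforward. For each facet $F\subset P$ corresponding to a facet $G\subset Q$ indexed by $J\subset[m]$, and with $H$ the facet hyperplane through $F$, the expected identity is
\[
\Res_H\omega \;=\; (\pi_G)_*\,(\phi_G)_*\,\Omega(\Delta^{d-1}),
\]
where $\phi_G$ and $\pi_G$ are the evident restrictions. The right-hand side is precisely the $(d-1)$-dimensional instance of the theorem applied to $F$, with vertex data $\{\W_i\}_{i\in J}$ and exponent data $\{\v_i\}_{i\in J}$, which continue to satisfy the oriented-matroid compatibility on the facet. By the inductive hypothesis this equals $\Omega(F)$. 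Coordinate strata of $\P^{m-1}$ corresponding to non-facet faces of $Q$ are mapped by $\pi$ into the residual arrangement $\RR_P$, which has codimension at least two in $\P^d$, so they produce no divisorial poles in $\omega$.

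The main obstacle is this last point: verifying that no spurious divisors appear in $\omega$, and that the residues along the genuine facets of $P$ are not corrupted by contributions from other boundary strata collapsed by $\pi$. The map $\Phi$ need not be birational, so the pushforward involves summing over several preimages of a generic point, and one must check that the sign-compatibility of determinants built into the oriented-matroid hypothesis produces the correct cancellations in this sum. This is exactly where the full strength of the combinatorial assumption enters the argument, and it is the step I would expect to require the most care.
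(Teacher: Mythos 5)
Your overall strategy is the right one, and it is the one the paper itself gestures at: the paper does not prove \cref{thm:Ppush} (it cites \cite[Section 7.3]{ABL}), but it does record exactly your factorization $\Phi = \beta\circ\alpha$ through the toric variety $X_\V$ in \cref{sec:toric}, and the intended argument is indeed to check the recursion of \cref{thm:Pdef} and invoke uniqueness. Your use of the oriented-matroid hypothesis to match the face lattices of $\Conv(\v_1,\ldots,\v_m)$ and $P$, and your plan to induct via ``residue commutes with pushforward,'' are both faithful to that argument.

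The genuine gap is the one you yourself flag, and it is not a peripheral technicality but the analytic heart of the theorem. Since $\Phi$ has degree equal to the normalized volume of $\Conv(\v_1,\ldots,\v_m)$, the pushforward is a sum $\sum_i (\Phi_i^{-1})^*\Omega(\Delta^d)$ over local branches of the inverse, and each individual summand acquires poles (and branch-type singularities) along the branch divisor of $\Phi$ --- the paper's own worked example makes this vivid (``neither term in the summation is a rational form!''). Your proposed mechanism for excluding spurious poles --- that non-facet coordinate strata of $\P^{m-1}$ map into codimension $\ge 2$ --- addresses only the images of boundary strata, not the branch divisor, which is codimension one in $\P^d$ and is where the unwanted poles would actually live. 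One must prove that the sum over branches is a rational form whose polar locus is exactly the union of facet hyperplanes with simple poles, and separately that the residue along a facet hyperplane $H$ picks up contributions precisely from the branches degenerating onto the toric divisor over the corresponding facet (with the correct signs so that contributions from other collapsed strata cancel). Without that cancellation argument --- which is where the positivity/sign-coherence of the oriented-matroid hypothesis is actually consumed --- the proof is incomplete; as written, you have reduced the theorem to its hardest step rather than established it.
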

In \cref{thm:Ppush}, the rational map $\Phi$ has degree equal to the normalized volume of the polytope with vertices $\V$, and maps $\Int(\Delta^d)$ homeomorphically to $\Int(P)$.
\cref{thm:Ppush} also has an interpretation in terms of toric varieties; see \cref{sec:toric}.

\begin{example}
Choose $\W_1,\W_2,\W_3,\W_4 \in \R^3$ to be the generators of $C(P)$ in \eqref{eq:cone}.  We will choose $\V$ to be a unit square, with $\V_1 = (1,0,0), \V_2 = (1,1,0), \V_3 = (1,1,1), \V_4 = (1,0,1)$, which clearly has the same oriented matroid as $\W$.  The map $\Phi$, restricted to $\R^2=\{(z_1,z_2)\}$ is given by
$$
\Phi(z_1,z_2) = \begin{bmatrix}1\\0\\0\end{bmatrix} + z_1 \begin{bmatrix}1\\2\\0\end{bmatrix} + z_1z_2 \begin{bmatrix}1\\1\\2\end{bmatrix} + z_2 \begin{bmatrix}1\\0\\1\end{bmatrix} = \begin{bmatrix}1+z_1+z_1z_2+z_2\\2z_1+z_1z_2\\ 2z_1z_2+z_2 \end{bmatrix} \approx \begin{bmatrix}1\\\frac{2z_1+z_1z_2}{1+z_1+z_1z_2+z_2}\\ \frac{2z_1z_2+z_2}{1+z_1+z_1z_2+z_2} \end{bmatrix}.
$$
Using a symbolic computation package, one computes that there are two solutions to $(x,y) = (\frac{2z_1+z_1z_2}{1+z_1+z_1z_2+z_2}, \frac{2z_1z_2+z_2}{1+z_1+z_1z_2+z_2})$, given by 

\noindent
\resizebox{\linewidth}{!}{
  \begin{minipage}{\linewidth}
\begin{align*}
  (z_1^{(1)},z_2^{(1)})=\z^{(1)} &= \left(\frac{\sqrt{2 (x-2) y+(x+2)^2+y^2}-3 x-y+2}{2 (2
  x+y-4)},-\frac{\sqrt{2 (x-2) y+(x+2)^2+y^2}+x-3 y+2}{2
  (x-y+1)}\right)\\
  (z_1^{(2)},z_2^{(2)})= \z^{(2)} &= \left(-\frac{\sqrt{2 (x-2) y+(x+2)^2+y^2}+3 x+y-2}{2 (2
  x+y-4)},\frac{\sqrt{2 (x-2) y+(x+2)^2+y^2}-x+3 y-2}{2 (x-y+1)}\right)
\end{align*} 
\end{minipage}
}

The differential form $\Phi_*(\frac{dz_1dz_2}{z_1z_2})$ is given by summing over the solutions $\z^{(1)}$ and $\z^{(2)}$.  Explicitly, for $i =1,2$, define the Jacobian 
$$
J^{(i)}(x,y) = \det\left(\begin{bmatrix}\frac{\partial z^{(i)}_1}{\partial x} & \frac{\partial z^{(i)}_1}{\partial y}\\
  \frac{\partial z^{(i)}_2}{\partial x} &\frac{\partial z^{(i)}_2}{\partial y} \end{bmatrix}\right).
$$
Then $$
\Phi_*\left(\frac{dz_1dz_2}{z_1z_2}\right) = \left(\frac{J^{(1)}(x,y)}{z_1^{(1)} z_2^{(1)}} + \frac{J^{(2)}(x,y)}{z_1^{(2)} z_2^{(2)}} \right)dx dy
$$
which recovers \eqref{eq:P}.  Note that neither term in the summation is a rational form!
\end{example}
\section{Definition of positive geometry}
Let $X$ be complex $d$-dimensional irreducible algebraic variety, $\omega$ a meromorphic $d$-form on $X$, and $H \subset X$ an (irreducible) hypersurface on $X$.  Assume that $\omega$ has at most simple poles on $H$.  Then the residue $\Res_H \omega$ is the $(d-1)$-form on $H$ defined as follows.  Let $f$ be a local coordinate such that $f$ vanishes to order one on $H$.  Write 
$$
\omega = \frac{df}{f} \wedge \eta + \eta' 
$$
for a $(d-1)$-form $\eta$ and a $d$-form $\eta'$, both without poles along $H$.  Then the restriction
\begin{equation}\label{eq:resdef}
\Res_H \omega:= \eta|_H
\end{equation}
is a well-defined $(d-1)$-form on $H$, not depending on the choices of $f, \eta,\eta'$.

Henceforth, we assume that $X$ is a complex $d$-dimensional irreducible algebraic variety defined over $\R$.  We equip the real points $X(\R)$ with the analytic topology.  Let $X_{\geq 0} \subset X(\R)$ be a closed semialgebraic subset such that the interior $X_{>0} = \Int(X_{\geq 0})$ is an oriented $d$-manifold, and the closure of $X_{>0}$ recovers $X_{\geq 0}$.
Let $\partial X_{\geq 0}$ denote the boundary $X_{\geq 0} \setminus X_{ \geq 0}$ and let $\partial X$ denote the Zariski closure of $\partial X_{\geq 0}$.  Let $C_1,C_2,\ldots,C_r$ be the irreducible components of $\partial X$.  (We assume that $X$ is generically smooth along each $C_i$, but see \cref{rem:singular}.)  Let 
$C_{i, \geq 0}$ denote the  closures of the interior of $C_i \cap X_{\geq 0}$ in $C_i(\R)$.  The spaces $C_{1,\geq 0}, C_{2,\geq 0},\ldots, C_{r,\geq 0}$ are called the boundary components, or facets of $X_{\geq 0}$.

\begin{definition}\label{def:PG}
  We call $(X,X_{\geq 0})$ a \emph{positive geometry} if there exists a unique nonzero rational $d$-form $\Omega(X,X_{\geq 0})$, called the \emph{canonical form}, satisfying the recursive axioms:
  \begin{enumerate}
    \item If $d = 0$, then $X = X_{\geq 0}= \pt$ is a point and we define $\Omega(X,X_{\geq 0})=\pm 1$ depending on the orientation.
    \item If $d>0$, then we require that $\Omega(X,X_{\geq 0})$ has poles only along the boundary components $C_i$, these poles are simple, and for each $i =1,2,\ldots,r$, we have
    \begin{equation}\label{eq:PGdef}
    \Res_{C_i}\Omega(X,X_{\geq 0})=\Omega(C_i,C_{i,\geq0}).
    \end{equation}
  \end{enumerate}
\end{definition}

\begin{remark}
  While orientations are suppressed in our notation and statements, we always insist that the orientation on a boundary component $C_{i,>0}$ is induced by that of $X_{>0}$.  
\end{remark}

We call $(X,X_{\geq 0})$ \emph{normal} if $X$ is a normal variety and each boundary component $(C_i, C_{i,\geq 0})$ is a normal positive geometry.

\begin{remark}\label{rem:singular}
  \def\tX{{\tilde{X}}}
  \def\tC{{\tilde{C}}}
If $X$ is not normal, one or more irreducible components $C \subset \partial X$ may belong to the singular locus of $X$.  In this case, the definition of boundary component and residue should be modified as follows.  Let $\pi: \tX \to X$ be the blowup of $X$ along the codimension one subvariety $C$.  Define $\tX_{\geq 0} = \overline{\pi^{-1}(X_{> 0} \setminus C)}$ to be the closure of the preimage under $\pi$ of the part of $X_{> 0}$ that does not belong to $C$.  Note that $\pi$ is an isomorphism away from $C$, so $\pi^{-1}(X_{> 0} \setminus C)$ and $X_{> 0} \setminus C$ are diffeomorphic manifolds.  We then define the boundary components of $(X,X_{\geq 0})$ over $C$ as before: let the irreducible components of the Zariski-closure of $(\tX_{\geq 0} \setminus \Int(\tX_{\geq 0})) \cap \pi^{-1}(C)$ be $\tC_1,\ldots,\tC_r$ and let 
$\tC_{i, \geq 0}$ denote the closures of the interior of $\tC_i \cap \tX_{\geq 0}$ in $\tC_i(\R)$.  In \cref{def:PG}, we use $(\tC_i,\tC_{i,\geq 0})$ for $i=1,2,\ldots,r$ in place of $(C,C_{\geq 0})$ and in \eqref{eq:PGdef} we take the residue of $\pi^*\Omega(X,X_{\geq 0})$ along each $\tC_i$.

For example, let $X$ be a rational curve with a node $p \in X(\R)$, and let $\pi:\P^1 \to X$ be the normalization where $a,b \in \P^1(\R)$ both map to $p$.  Let $X_{\geq 0}:= \pi([a,b])$ be the image of the closed interval from $a$ to $b$.   Then $\partial X_{\geq 0} = \{p\}$.  Then $(X,X_{\geq 0})$ is a positive geometry with canonical form the 1-form on $X$ that pullsback to \eqref{eq:interval} on $\P^1$.  This example appears as a boundary of the positive geometry in \cref{fig:polypols}(a).
\end{remark}

For brevity, we may refer to $X_{\geq 0}$ as a positive geometry, and $\Omega(X_{\geq 0})$ its canonical form.  For $(X, X_{\geq 0})$ to be a positive geometry, $X$ cannot have nonzero holomorphic $d$-forms.  Furthermore, if $X$ does not have nonzero holomorphic $d$-forms, then the uniqueness of the canonical form is immediate: the difference of any two such forms would be holomorphic.

Any positive geometry $(C,C_{\geq 0})$ encountered in the recursion \cref{def:PG} is called a \emph{face} of the positive geometry $X_{\geq 0}$.

For $d = 1$, we must have that $X$ is a rational curve.  If $(X,X_{\geq 0})$ is normal, then we have $X \cong \P^1$.  Any finite union of closed intervals in $\P^1(\R)$ is a positive geometry.
For an interval $[a, b]$ (in some affine chart), the canonical form is given by \eqref{eq:interval}.
The canonical form of a disjoint union $\bigcup_i [a_i,b_i]$ of closed intervals is the sum $\sum_i \Omega([a_i,b_i])$.  

Note that $\P^1(\R)$ is not a positive geometry.  We have $\partial \P^1(\R) = \emptyset$, but there is no 1-form on $\P^1$ with no poles.  

Disjoint unions of positive geometries in the same ambient projective algebraic variety are again positive geometries.  



\section{Examples of positive geometries}\label{sec:examples}

It follows from \cref{thm:Pdef} that polytopes are positive geometries.  

\subsection{Simplex-like positive geometries}
With \cref{thm:Padjoint} in mind, we call a positive geometry $X_{\geq 0}$ \emph{simplex-like} if $\Omega(X_{\geq 0})$ has no zeroes.  Simplex-like positive geometries are particularly simple since their canonical forms are almost defined without using the recursion of \cref{def:PG}.  Namely, let $\Omega$ and $\Omega'$ be two rational forms, with simple poles along $\partial X$, and no other poles or zeroes.  Then the ratio $\Omega/\Omega'$ is a regular function on the projective variety $X$, and therefore a constant.  Thus the canonical form is defined up to a scalar without any requirement on the residues.

If $(X,X_{\geq 0})$ is a normal simplex-like positive geometry, then $\partial X$ is an anticanonical divisor in $X$.  See \cite[Section 5]{KLS2} for related discussion.

It is also convenient to introduce a slight weakening of the simplex-like condition.  We say that two positive geometries $(X,X_{\geq 0})$ and $(Y,Y_{\geq 0})$ are \emph{birationally isomorphic} if there is a birational isomorphism $f: X \dashrightarrow Y$, inducing a diffeomorphism $X_{\geq 0} \cong Y_{\geq 0}$ which respects the face stratification.  For example, $X$ could be a blow up of $Y$ along a subvariety that does not intersect $X_{\geq 0}$.  

We say that $(X, X_{\geq 0})$ is \emph{birationally simplex-like} if it is birationally isomorphic to a simplex-like positive geometry.

\subsection{Dimension 2}
Let us first consider normal positive geometries $(X,X_{\geq 0})$ with $X = \P^2$.  The normality condition implies that each boundary component is a smooth rational curve, and thus either a line or a conic.  Indeed, let $R \subset \P^2(\R)$ be a region that that is bounded by a simple closed curve $\gamma$, which is a union of finitely many pieces each of which is either a line segment or a part of a conic.  Most such regions $R$ are positive geometries.  For example, $R$ could be a convex polygon, or a non-convex polygon, or half of a disk; see \cref{fig:dimtwo}.  

On the other hand, let $R$ be the closed unit disk.  Then $R$ is \emph{not} a positive geometry, since the boundary is a circle, the whole of $\P^1(\R)$, which is not a one-dimensional positive geometry.  

\begin{figure}
  \includegraphics[width=1.0\textwidth]{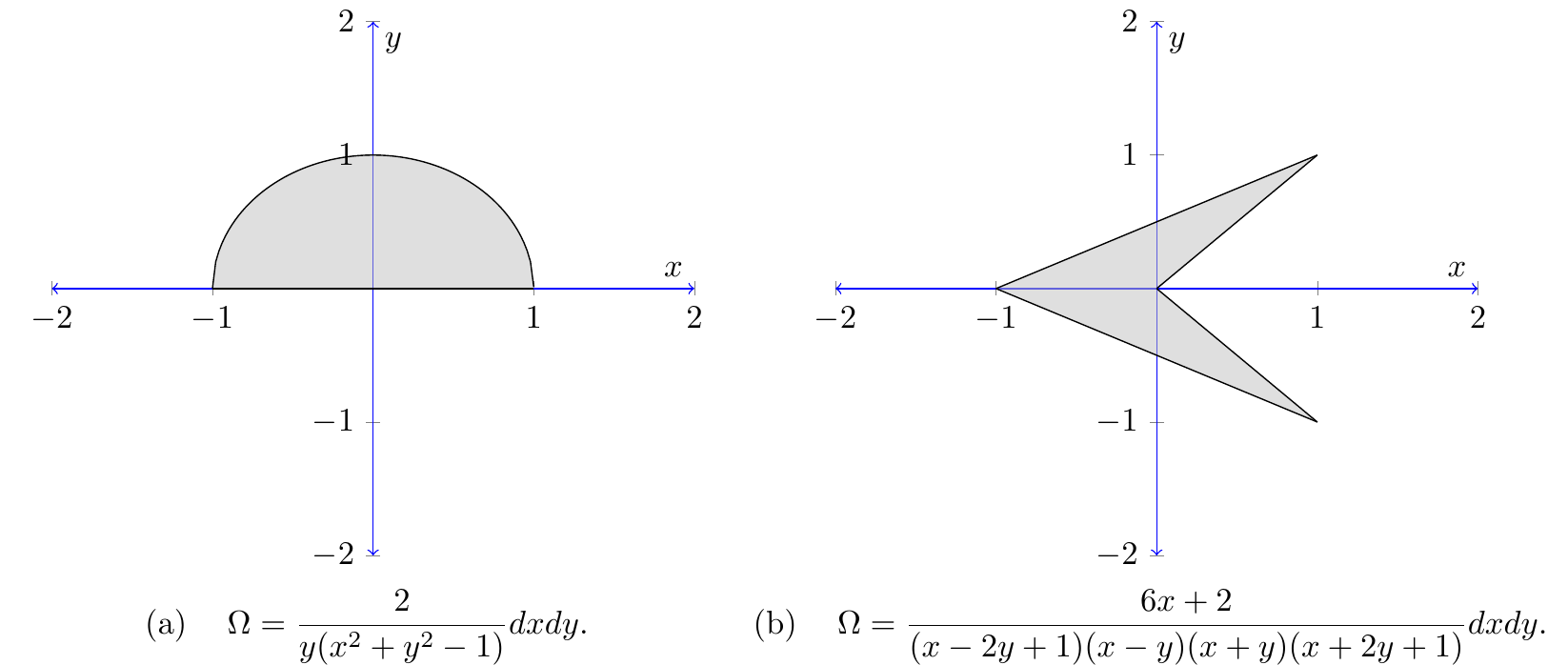}
 \caption{\label{fig:dimtwo} Some positive geometries in the plane that are not convex polygons.  (a): upper half unit disk.  (b): nonconvex 4-gon.}
\end{figure}

The list grows if we allow $(X,X_{\geq 0})$ to be non-normal.  A large zoo of examples arise from the theory of \emph{rational polypols} in the work of Kohn, Piene, Ranestad, Rydell, Shapiro, Sinn, Sorea, and Telen \cite{KPRRSSST}.  See \cref{fig:polypols} for some examples.  

Unfortunately, a classification of positive geometries in arbitrary dimensions seems out of reach.

\begin{figure}
  \includegraphics[width=1.0\textwidth]{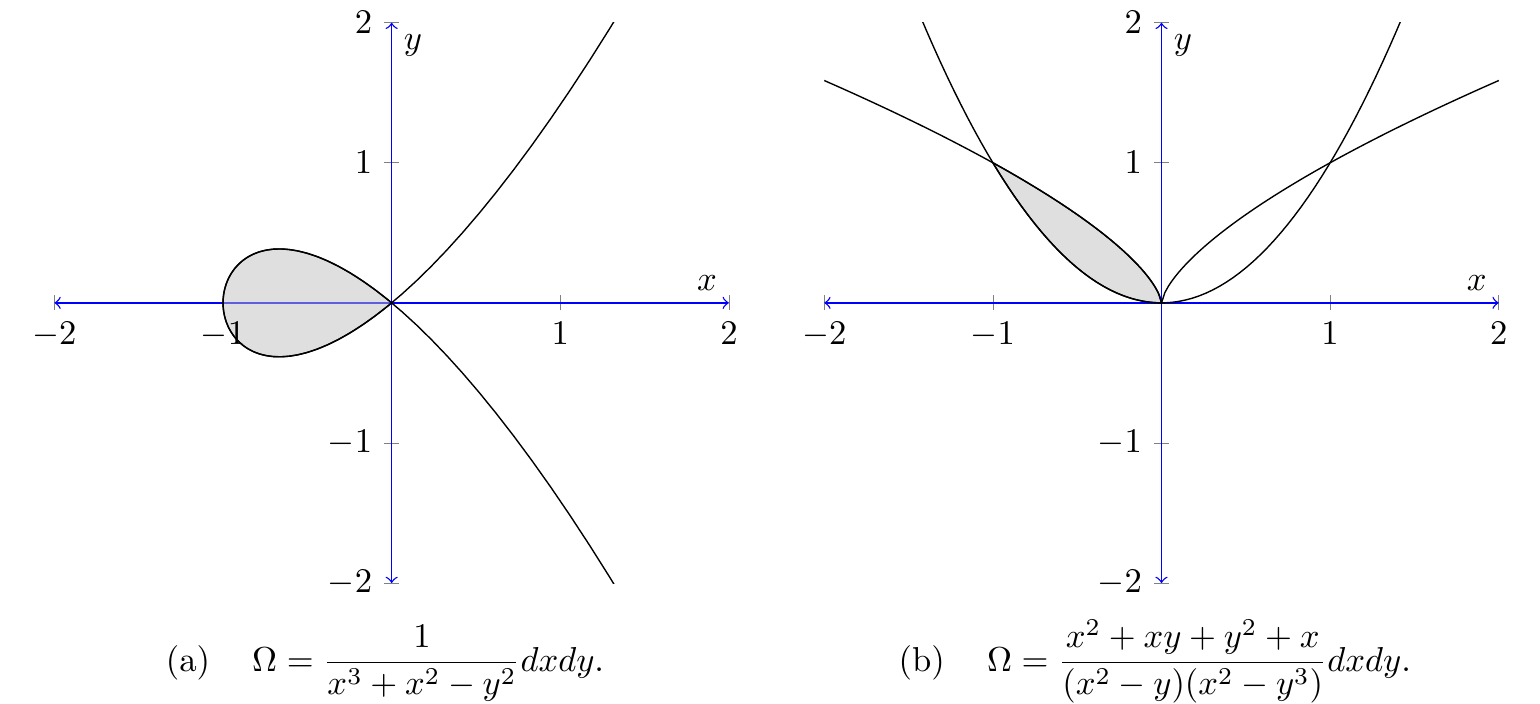}
 \caption{\label{fig:polypols} Some rational polypols.  (a): boundary nodal cubic with equation $y^2=x^2(x+1)$.  (b): boundary curves $y=x^2$ and $y^3=x^2$.}
\end{figure}

\subsection{Toric varieties}\label{sec:toric}
Let $X_P$ be the normal, projective toric variety associated to a $d$-dimensional lattice polytope $P$.  Then it has a natural positive part $X_{P,>0}$, defined to be the positive part $\R_{>0}^d$ of the torus $T(\R) \subset T$ that sits inside $X_P$.  We define the nonnegative part $X_{P,\geq 0}:= \overline{X_{P,>0}}$.

\begin{theorem}[\cite{ABL}]
$(X_P,X_{P,\geq 0})$ is a normal, simplex-like positive geometry.
\end{theorem}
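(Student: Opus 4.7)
The plan is to exhibit the canonical form explicitly as the torus-invariant logarithmic $d$-form on $X_P$, verify the residue recursion by a local calculation at each torus-invariant divisor, and then deduce uniqueness and the simplex-like property from general principles.

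First, I would choose character coordinates $x_1,\ldots,x_d$ on the open torus $T \subset X_P$ and propose the candidate
$$
\Omega := \frac{dx_1}{x_1} \wedge \cdots \wedge \frac{dx_d}{x_d},
$$
extended as a rational $d$-form on $X_P$. A standard toric computation identifies its divisor on any normal toric variety: $\mathrm{div}(\Omega) = -\sum_F D_F$, where $D_F$ ranges over the torus-invariant prime divisors, one for each facet $F$ of $P$. Thus $\Omega$ has simple poles along each $D_F$ and no zeros anywhere, exhibiting it as a candidate simplex-like canonical form.

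Next, I would identify the boundary structure of $X_{P,\geq 0}$. The moment map gives a homeomorphism $X_{P,\geq 0} \cong P$ that restricts to a diffeomorphism on each open stratum, so the Zariski closure of $\partial X_{P,\geq 0}$ is precisely $\bigcup_F D_F$. Each $D_F$ is itself a normal projective toric variety, associated to the lattice polytope $F$, and $D_{F,\geq 0} := X_{P,\geq 0}\cap D_F$ is its nonnegative part. This recovers the same setup in dimension $d-1$ and sets up an induction on $d$, with base case $d=0$ immediate.

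To verify the residue axiom at the facet $F$, I would work locally near a torus-fixed vertex of $F$. Let $v_F$ be the primitive inward normal to $F$; after a unimodular change of character basis I may assume $x_d$ pairs with $v_F$, so in the affine chart at the vertex, $x_d$ cuts out $D_F$ to order one and $x_1,\ldots,x_{d-1}$ restrict to a character basis of the subtorus of $D_F$. Writing $\Omega = \tfrac{dx_d}{x_d}\wedge \tfrac{dx_1}{x_1}\wedge \cdots \wedge \tfrac{dx_{d-1}}{x_{d-1}}$ then gives $\Res_{D_F}\Omega = \tfrac{dx_1}{x_1}\wedge\cdots\wedge\tfrac{dx_{d-1}}{x_{d-1}}$, which by the inductive hypothesis equals $\Omega(D_F,D_{F,\geq 0})$ up to the sign dictated by the induced orientation.

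Uniqueness then follows from the simplex-like argument recalled in the paper: any two rational forms with simple poles only along $\partial X_P$ and no zeros differ by a global regular function on the projective variety $X_P$, hence by a constant, which is pinned down by residue matching. Since $\Omega$ has no zeros the geometry is simplex-like, and normality of $(X_P, X_{P,\geq 0})$ reduces to normality of $X_P$ together with the inductive normality of each facet toric variety. The main subtle point is the local residue calculation: it is crucial that $v_F$ be primitive and the basis change unimodular, so that the residue lands on the torus-invariant log form of $D_F$ with multiplicity one rather than picking up a lattice index as an extra scalar.
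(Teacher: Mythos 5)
Your argument is correct and is essentially the proof the paper points to: the paper only records the candidate form $\dlog x_1\wedge\cdots\wedge\dlog x_d$ and defers to \cite{ABL}, and your outline (the divisor of the torus-invariant log form is $-\sum_F D_F$, the moment-map identification of the boundary strata with the faces of $P$, the residue computation using primitivity of the facet normal, and induction on the facet toric varieties) is the standard way to fill in the details. Two small refinements: carry out the residue computation in the affine chart of the single ray $\rho_F$ (which is always smooth, isomorphic to $\C\times(\C^*)^{d-1}$) rather than at a torus-fixed vertex of $F$, since for a non-simple lattice polytope the vertex chart of $X_P$ can be singular and then no unimodular coordinate system of the kind you describe exists there; and note that uniqueness of the canonical form among \emph{all} rational forms satisfying the residue recursion (not merely among the zero-free ones) requires the vanishing of holomorphic top-forms on $X_P$, which holds because $X_P$ is a rational projective variety.
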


The canonical form is the natural form $\Omega = \dlog x_1 \wedge \cdots \wedge \dlog x_n$ on $T$, extended to a rational top-form on $X_P$.  

While the combinatorics of faces for the two positive geometries, the polytope $P$, and the positive toric variety $X_{P,\geq 0}$ are identical, the algebraic geometry is quite different.  In particular, the canonical form $\Omega(X_{P, \geq 0})$ has no zeroes while $\Omega(P)$ typically has a very interesting zero set (\cref{thm:Padjoint}).  From this perspective, toric varieties are simpler positive geometries than polytopes.

A \emph{morphism} $f: (X, X_{\geq 0}) \to (Y,Y_{\geq 0})$ of postive geometries is a rational map $f: X \dashrightarrow Y$ which restricts to a diffeomorphism $f|_{X_{>0}}: X_{>0} \to Y_{>0}$.  In \cite{ABL}, we formulated the heuristic that canonical forms should pushforward under morphisms of positive geometries.  \cref{thm:Ppush} is an example of this for a morphism from a toric variety to a polytope.  More precisely, the rational map $\Phi: \P^d \dashrightarrow \P^d$ of in \cref{thm:Ppush} can be factored into the rational maps
\begin{align*}
\alpha: &\P^d \dashrightarrow \P^{m-1} \qquad (1:z_1: \cdots:z_d) \longmapsto (\z^{\v_1}: \cdots : \z^{\v_m}) \\
\beta: &\P^{m-1} \dashrightarrow \P^d  \qquad (u_1:u_2: \cdots: u_m) \longmapsto \sum_{i=1}^m u_i \W_i.
\end{align*}
The (closure of the) image of $\alpha$ is the toric variety $X_\V$ associated to $\V$.  The linear map $\beta$ is a morphism of positive geometries from $X_{\V,\geq 0}$ to the polytope $P$.  In the case that $\V = \W$, the map $\beta$ is called the \emph{algebraic moment map}.

\subsection{Totally nonnegative Grassmannian and flag varieties}
Let $G$ be a split real reductive group and $P \subset G$ a parabolic subgroup.  Lusztig \cite{LusGP} has defined the totally nonnegative part $(G/P)_{\geq 0}$ of the partial flag variety $G/P$.  The following result was stated as an expectation in \cite{ABL}.

\begin{theorem}\label{thm:GP}
  $(G/P, (G/P)_{\geq 0})$ is a normal, simplex-like positive geometry.
\end{theorem}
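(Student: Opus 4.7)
The plan is to induct on dimension, proving the stronger statement that every closed projected Richardson variety $\Pi \subset G/P$, equipped with its totally nonnegative part $\Pi_{\geq 0}$, is a normal simplex-like positive geometry; the theorem is the case $\Pi = G/P$. The three inputs I would draw on are: the cell decomposition of $(G/P)_{\geq 0}$ by positive parts of open projected Richardson varieties (Rietsch, Marsh--Rietsch, Galashin--Karp--Lam), the normality of projected Richardson varieties (Knutson--Lam--Speyer), and the fact (again from \cite{KLS2}) that the sum of all codimension-one projected Richardson subvarieties is an anticanonical divisor on $G/P$.

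Granting these, the first step is to observe that $\partial(G/P)$, the Zariski closure of the topological boundary of $(G/P)_{\geq 0}$, equals the union of the codimension-one projected Richardson divisors, hence is anticanonical: $\omega_{G/P}(\partial(G/P)) \cong \mathcal{O}_{G/P}$. Since $G/P$ is connected and proper, the space of global sections of $\mathcal{O}_{G/P}$ is one-dimensional, so up to scalar there is a unique nonzero rational $d$-form $\Omega$ on $G/P$ whose divisor is $-\partial(G/P)$. By construction $\Omega$ has simple poles exactly along the facets and no zeros, making it a simplex-like candidate for $\Omega((G/P)_{\geq 0})$.

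To close the argument I would verify the residue axiom inductively. For each facet $C$, a closed codimension-one projected Richardson variety, applying the anticanonicality statement to $C$ shows that $\Res_C \Omega$ is a rational top form on $C$ with simple poles along the facets of $C_{\geq 0}$ and no zeros; by the inductive hypothesis and the uniqueness (up to scalar) of simplex-like canonical forms, $\Res_C \Omega$ agrees with $\Omega(C, C_{\geq 0})$ up to a scalar. One then fixes the overall normalization of $\Omega$ by matching a single facet. The main obstacle, and the step requiring genuine work, is to show that the scalars on all remaining facets are then forced to be $+1$: this is a compatibility check around each codimension-two face where two facets meet, and it reduces to an explicit local computation in Lusztig's positive parametrization, with the dimension-two case of \cref{def:PG} as the base. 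Normality is inherited from the smoothness of $G/P$ and the inductive normality of each projected Richardson facet.
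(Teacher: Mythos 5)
Your outline matches the paper's strategy in its first half almost exactly: reduce to proving the statement for every projected Richardson variety $\Pi_v^w$, invoke normality and the anticanonicality of $\partial \Pi_v^w$ from \cite{KLS2} to get a form $\omega_v^w$, unique up to scalar, that is holomorphic and nonvanishing on $\oPi_v^w$ with simple poles along the facets, and observe that this already gives the simplex-like property once the residue axiom is verified. Where you diverge is precisely at the step you yourself flag as ``requiring genuine work,'' and there you leave the heart of the proof as a black box. Your proposed mechanism --- normalize one facet and propagate the scalar by a compatibility check around codimension-two faces --- is plausible in outline, but to run it you would need the antisymmetry of iterated residues across each codimension-two stratum (which requires knowing the local structure of the intersections of two boundary divisors), connectivity of the facet--ridge graph of the face poset, and a consistent global choice of orientations; none of these is supplied. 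The paper instead fixes the scalars directly: it defines the candidate form explicitly as $\Omega_v^w = \pm\bigwedge_{k\in J} dt_k/t_k$ in the Marsh--Rietsch parametrization $g_{\v,\w}(\t)$, anchors the computation at $(v,w)=(e,w_0)$ via \cite[Proposition 2.11]{LamGB}, and obtains the residue relation $\Res_{\Pi_{v'}^{w'}}\Omega_v^w = \pm\Omega_{v'}^{w'}$ by degenerating the parametrization ($t_k\to 0$ to shrink $w$, $t_k\to\infty$ to grow $v$), handling the case $v'=v$ by transporting the case $w'=w$ through Lusztig's automorphism $\phi:\oPi_v^w\mapsto \oPi_{ww_0}^{vw_0}$. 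The sign ambiguity is resolved globally using the theorem of \cite{GKL3} that $(G/B)_{\geq 0}$ is a stratified closed ball, so a single orientation can be chosen compatibly on all strata --- a point your sketch needs but does not address. So: right skeleton, but the normalization argument, which is the actual content of the appendix, still has to be carried out, and the paper's route to it is more computational (explicit parametrizations and degenerations) than the local codimension-two bookkeeping you propose.
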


For completeness, we give a proof of \cref{thm:GP} in \cref{sec:appendix}.

The face stratification of $(G/P)_{\geq 0}$ is well studied; see \cite{RieGP,LusGP}.  We have a decomposition $G/P = \bigsqcup_{[v,w]_P} \oPi_v^w$ where the \emph{open projected Richardson varieties} $\oPi_v^w$ \cite{KLS2} are indexed by equivalence classes of $P$-Bruhat intervals.  Denote by $\Pi_v^w = \overline{\oPi_v^w}$ the (closed) projected Richardson variety. The faces of $(G/P)_{\geq 0}$ are the positive geometries $(\Pi_v^w, (\Pi_v^w)_{\geq 0})$, where $(\Pi_v^w)_{\geq 0}:= \Pi_v^w \cap (G/P)_{\geq 0}$.

The case of the Grassmannian $\Gr(k,n)$ of $k$-planes in $\C^n$ is of particular importance in numerous situations.  Postnikov \cite{Pos} gave an independent definition of the totally nonnegative part $\Gr(k,n)_{\geq 0}$, as the subspace represented by $k \times n$ matrices all of whose $k \times k$ minors are nonnegative.  For $k = 1$, we have $\Gr(1,n)_{\geq 0} = \P^{n-1}_{\geq 0}$ is a $n-1$ dimensional simplex.

\begin{theorem}\label{thm:Gr}
$(\Gr(k,n), \Gr(k,n)_{\geq 0})$ is a normal, simplex-like positive geometry.
\end{theorem}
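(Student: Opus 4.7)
The plan is to deduce \cref{thm:Gr} as a special case of \cref{thm:GP}. The Grassmannian $\Gr(k,n)$ is the partial flag variety $\mathrm{SL}_n/P_k$, where $P_k \subset \mathrm{SL}_n$ is the maximal parabolic subgroup stabilizing a fixed $k$-dimensional coordinate subspace. Granting the identification of Postnikov's $\Gr(k,n)_{\geq 0}$ with Lusztig's $(\mathrm{SL}_n/P_k)_{\geq 0}$, the conclusion is immediate: normality, the simplex-like property, and the recursive residue axiom are all inherited from \cref{thm:GP}.

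The agreement of the two notions of nonnegativity is well established. Rietsch proved it in the general $G/P$ setting, and the explicit match of Lusztig--Rietsch cells with Postnikov's positroid cells was carried out by Marsh--Rietsch and Talaska--Williams. Under this identification, the face stratification of $\Gr(k,n)_{\geq 0}$ by positroid cells coincides with the stratification $\bigsqcup_{[v,w]_{P_k}} \oPi_v^w$ by open projected Richardson cells; the closed faces are the projected Richardson varieties $\Pi_v^w$, whose normality was established by Knutson--Lam--Speyer.

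Once this matching is in place, the simplex-like property also admits a very clean direct description on the Grassmannian side. The codimension-one boundary $\partial \Gr(k,n)$ is the union of the $n$ cyclic Plücker divisors $\{\Delta_{\{i,i+1,\ldots,i+k-1\}}=0\}$ (the ``frozen'' Plücker coordinates), whose sum is anticanonical: each is a section of $\mathcal{O}(1)$ in the Plücker embedding and $-K_{\Gr(k,n)} \sim n\cdot [\mathcal{O}(1)]$. Consequently, the rational top form given, in affine Plücker coordinates, as a suitable constant times the $\mathrm{SL}_n$-invariant volume form divided by the product of the $n$ cyclic minors, has simple poles along $\partial \Gr(k,n)$ and no zeros; this directly exhibits the candidate canonical form.

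The only substantive remaining step is the inductive verification that the residues along the boundary strata reproduce the canonical forms of the faces, but for a simplex-like positive geometry this amounts to fixing a single overall scalar, and the combinatorics of positroid facets meshes with that of projected Richardson facets through the dictionary mentioned above. The main obstacle is therefore not in \cref{thm:Gr} itself but in the underlying \cref{thm:GP} (proved in \cref{sec:appendix}) and in the comparison between Lusztig's and Postnikov's definitions; both are by now well documented. An essentially equivalent alternative would be to mimic the pushforward strategy of \cref{thm:Ppush}: parametrize a top-dimensional positroid cell by $\mathbb{R}_{>0}^{k(n-k)}$ via, e.g., a reduced-word or boundary-measurement parametrization, and push forward the form $\bigwedge d\log t_i$ along the resulting birational map to $\Gr(k,n)$.
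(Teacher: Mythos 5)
Your proposal is correct and matches the paper's intent exactly: \cref{thm:Gr} is the special case of \cref{thm:GP} for $\Gr(k,n)=\mathrm{SL}_n/P_k$, combined with the standard (Rietsch/Marsh--Rietsch/Talaska--Williams) identification of Postnikov's nonnegative Grassmannian with Lusztig's, under which positroid cells correspond to projected Richardson strata. Your added explicit description of the canonical form via the $n$ cyclic Pl\"ucker divisors (whose union is anticanonical) is consistent with the paper's discussion but not needed beyond \cref{thm:GP}.
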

The faces of $(\Gr(k,n), \Gr(k,n)_{\geq 0})$ are positive geometries called \emph{positroid cells}, and their Zariski closures are called \emph{positroid varieties} \cite{Pos, KLS, CDM}.


\begin{remark}
Conjecturally, the space of planar Ising models \cite{GP}, also called the positive orthogonal Grassmannian \cite{HW}, and the space of electrical networks \cite{LamElec}, also called the positive Lagrangian Grassmannian \cite{CGS,B} are positive geometries.  
\end{remark}

We expect that other spaces appearing in total positivity, such as wonderful compactifications or Kac-Moody flag varieties \cite{BH,He}, are positive geometries.

\subsection{Moduli space}
Let $X = \bM_{0,n}$ be the Deligne-Knudsen-Mumford compactification \cite{DM} of the moduli space of $n$ points on $\P^1$.  This is a smooth complex projective variety of dimension $n - 3$.  The open subset $\bM_{0,n} \subset X$ is the moduli-space of $n$ distinct points on $\P^1$.  It is isomorphic to a hyperplane arrangement complement.   The real part $\bM_{0,n}(\R)$ is a $(n-3)$-dimensional smooth manifold with $(n-1)!/2$ connected components.  We define $(\bM_{0,n})_{\geq 0} \subset X$ to be the closure in $X$ of one of these connected components.  (The group $S_n$ acts on $X$ by permuting the $n$ points, and this action acts transitively on the connected components of $\bM_{0,n}(\R)$.)

\begin{theorem}[{\cite[Proposition 8.2]{AHLcluster}}]
$(\bM_{0,n},(\bM_{0,n})_{\geq 0} )$ is a positive geometry.
\end{theorem}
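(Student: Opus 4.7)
My plan is to induct on $n$, with the trivial base case $\bM_{0,3}=\mathrm{pt}$, $\Omega=\pm 1$. After using $\PGL_2$ to fix $z_1=0$, $z_{n-1}=1$, $z_n=\infty$, the designated positive component becomes the open simplex $\{0<z_2<\cdots<z_{n-2}<1\}$, and the candidate canonical form is the \emph{Parke-Taylor form}
$$
\omega_n = \frac{dz_2 \wedge \cdots \wedge dz_{n-2}}{z_2\,(z_3-z_2)(z_4-z_3)\cdots(z_{n-2}-z_{n-3})\,(1-z_{n-2})},
$$
which is the $\PGL_2$-reduction of the cyclic form $\prod_i dz_i / \prod_i (z_i-z_{i+1})$ on $(\P^1)^n$. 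Uniqueness is immediate because $H^0(\bM_{0,n},\Omega^{n-3})=0$: the Hodge numbers of $\bM_{0,n}$ are concentrated on the diagonal $p=q$, so any two candidate canonical forms differ by a holomorphic top form, which must vanish.

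For existence I would verify the pole and residue behavior of $\omega_n$ along each boundary divisor of $\bM_{0,n}$. These are indexed by $2$-part partitions $\{1,\ldots,n\}=S\sqcup S^c$ with $|S|,|S^c|\ge 2$, and $D_S\cong\bM_{0,|S|+1}\times\bM_{0,|S^c|+1}$. The combinatorial structure of $(\bM_{0,n})_{\ge 0}$ (the cyclohedron/associahedron) tells us that its boundary facets are exactly the \emph{dihedral} divisors $D_S$, those with $S$ a cyclically consecutive subset of $(1,\ldots,n)$. So the three things to check are: (i) $\omega_n$ is regular along every non-dihedral $D_S$; (ii) $\omega_n$ has a simple pole along every dihedral $D_S$; and (iii) $\Res_{D_S}\omega_n = \omega_{|S|+1}\boxtimes\omega_{|S^c|+1}$, which by induction and the product formula for canonical forms of products of positive geometries equals $\Omega(D_S,(D_S)_{\ge 0})$.

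All three reduce to a local computation. Near $D_S$, introduce \emph{bubbling} coordinates $z_k = a + t\,w_k$ for $k\in S$, where $t=0$ cuts out $D_S$ and the $w_k$ (modulo $\PGL_2$ on the bubble) parametrize the $\bM_{0,|S|+1}$ factor. Each consecutive difference $z_{k+1}-z_k$ with both $k,k+1\in S$ acquires a factor of $t$, straddling differences remain of order $1$, and the Jacobian of the substitution contributes $t^{|S|-1}$. For $S$ dihedral there are exactly $|S|-1$ consecutive differences lying entirely inside $S$, yielding a net simple pole whose residue factors as a product Parke-Taylor form after the bubble reduction; for $S$ non-dihedral the count is strictly smaller, so the Jacobian dominates and $\omega_n$ is regular along $D_S$. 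The main obstacle is this local residue analysis --- specifically, keeping track of the $\PGL_2$-fixings on both the bubble and the base component, handling the $z_n=\infty$ chart carefully when $n\in S$, and verifying that the residue matches the canonical form on $D_S$ on the nose rather than only up to a scalar. Once that is in hand, the recursion of \cref{def:PG} holds by induction.
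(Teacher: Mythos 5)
Your overall strategy is correct and is essentially the standard argument (the paper itself gives no proof, deferring to \cite[Proposition 8.2]{AHLcluster}, where the canonical form is identified with the Parke--Taylor form in just this way): uniqueness from the vanishing of holomorphic top forms on the smooth projective rational variety $X$, identification of the facets of $(\bM_{0,n})_{\geq 0}$ with the dihedral boundary divisors $D_S$ (this is the associahedron, not the cyclohedron), and a local degeneration computation giving regularity along non-dihedral $D_S$, a simple pole along dihedral $D_S$, and a residue that factors as $\omega_{|S|+1}\boxtimes\omega_{|S^c|+1}$, closing the induction via the product rule for canonical forms.

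One concrete slip in the local computation: after writing $z_k=a+tw_k$ for $k\in S$, the bubble $\bM_{0,|S|+1}$ carries the $|S|$ points $w_k$ plus the node at $\infty$, so two of the $w_k$ must be gauge-fixed (say $w_{\min}=0$, $w_{\max}=1$, absorbing $a$ and $t$); the wedge $\bigwedge_{k\in S}dz_k$ then equals $\pm t^{|S|-2}\,da\wedge dt\wedge\bigwedge dw_k$, i.e.\ the Jacobian contributes $t^{|S|-2}$, not $t^{|S|-1}$. With your stated exponent the $|S|-1$ internal consecutive differences in the denominator would exactly cancel the Jacobian and you would conclude that $\omega_n$ has \emph{no} pole along a dihedral $D_S$, contradicting your own conclusion. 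With the corrected exponent the count works: dihedral $S$ gives $t^{|S|-2}/t^{|S|-1}=t^{-1}$ (simple pole, with residue the product Parke--Taylor form), while a non-dihedral $S$ splits into $r\geq 2$ cyclic runs and has only $|S|-r\leq|S|-2$ internal consecutive differences, giving $t^{\geq 0}$ (regular). With that fix, and the care you already flag about the $z_n=\infty$ chart and orientations, the argument goes through.
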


For a discussion of differential forms on $\bM_{0,n}$, see for example \cite{BCS}.

The positive geometry $(\bM_{0,n})_{\geq 0}$ is diffeomorphic to the associahedron polytope as a stratified space.  This example is not a simplex-like geometry.  However, it is birationally simplex-like.  Indeed, it is birationally isomorphic to the toric variety $(X_P, X_{P,\geq 0})$ associated to the associahedron; see \cite[Section 10]{AHL}.  For closely related geometries, see the cluster configuration spaces of \cite{AHLcluster} and the positive Chow cells of \cite{ALS}.

\subsection{Cluster varieties}
Let $Y$ be a $d$-dimensional cluster variety, that is, the spectrum $Y = \Spec(A)$ of a cluster algebra of geometric type.  For cluster algebras, we follow the convention that frozen variables are inverted \cite{LS}.

We will assume that $Y$ is a reasonable cluster variety, for example, locally acyclic and of full rank \cite{Mul}.  Then $Y$ has a natural positive part $Y_{>0}$, the positive part of any cluster torus in $Y$.  Furthermore, $Y$ has a natural top form $\Omega(Y_{>0}):= \prod_i dx_i/x_i$, defined up to sign, where $(x_1,\ldots,x_d)$ is a seed of the cluster algebra.

\begin{conjecture}
  Let $Y$ be a cluster variety, locally acyclic and of full rank.  Then there is a compactification $X$ of $Y$ such that $(X,X_{\geq 0})$ is a simplex-like positive geometry, where:
  \begin{enumerate}
    \item $X_{>0} = Y_{>0}$ and $Y = X \setminus \partial X$,
    \item $\Omega(X_{\geq 0}) = \Omega(Y_{>0})$,
    \item each face positive geometry $(C,C_{\geq 0})$ of $(X,X_{\geq 0})$ is also a compactification of a cluster variety (and its positive part).
  \end{enumerate}
\end{conjecture}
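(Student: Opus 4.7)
The plan is to construct $X$ chart-by-chart from the cluster structure, verify the positive-geometry axioms using the mutation invariance of the top form $\Omega = \bigwedge_i dx_i/x_i$, and then close an induction on dimension. Locally acyclic, full-rank cluster algebras are well-suited to such a chart-based approach because they behave essentially like smooth affine varieties in each seed.

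For the construction, in each cluster seed $(x_1,\ldots,x_d)$ I would take the partial compactification obtained by un-inverting the mutable variables, so locally $X$ looks like a $d$-dimensional affine space minus the loci where the frozen variables vanish. The mutation relations $x_k x_k' = M_+ + M_-$ make these local pictures compatible across seeds, and under the full-rank and local-acyclicity assumptions (following Muller), the gluing should yield a well-defined $X$ containing $Y$ as the open locus where all cluster variables are nonzero. Condition (1) then holds by construction: $X_{>0} = Y_{>0}$ since positivity of all cluster variables characterizes $Y_{>0}$, and $\partial X$ decomposes into divisors $D_\alpha = \{x_\alpha = 0\}$ indexed by cluster variables $x_\alpha$ that appear as coordinates in some seed.

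In any chart, the form $\Omega = \bigwedge_i dx_i/x_i$ has only simple poles along the coordinate hyperplanes and no zeros, and it is preserved up to sign by mutation; hence $\Omega$ extends to a top form on $X$ whose polar divisor is exactly $\partial X$, yielding simplex-likeness and condition (2). For the residue axiom, fix a seed containing $x_k$; then
\[
\Res_{\{x_k = 0\}} \Omega = \bigwedge_{i \neq k} \frac{dx_i}{x_i}
\]
restricted to $D_k$. Each divisor $D_k$ should inherit a rank-$(d-1)$ cluster structure by freezing $x_k$ and restricting to its vanishing locus, so that the restricted residue becomes the canonical form of a smaller cluster variety, establishing condition (3). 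An induction on $d$ then closes the positive-geometry recursion.

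The principal obstacle is the infinite-type setting, in which there can be infinitely many cluster variables and hence infinitely many candidate boundary divisors; a naive chart-by-chart gluing will not produce a reasonable algebraic compactification. Addressing this likely requires the scattering-diagram and theta-basis machinery of Gross--Hacking--Keel--Kontsevich to produce an $X$ whose boundary has good algebro-geometric properties (such as having finitely many components, or forming a normal-crossings divisor in a suitable sense). A secondary challenge is showing that the boundary stratum $D_k$ genuinely inherits a locally acyclic, full-rank cluster structure of one lower dimension: the interaction of freezing with restriction to coordinate hyperplanes is delicate, and without this structural claim the inductive step for condition (3) does not close.
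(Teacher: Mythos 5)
This statement is posed in the paper as an open conjecture, with no proof given; the paper only remarks that \cref{thm:Gr} (the Grassmannian, via the identification of open positroid strata with cluster varieties in \cite{GLcluster}) is the prototypical instance. So there is no argument of the paper to compare yours against, and the real question is whether your sketch closes the conjecture. It does not, and you have honestly flagged the two places where it fails: the construction of $X$ in infinite type, and the inheritance of a cluster structure by the boundary divisors. These are not technical loose ends; they are the content of the conjecture. Your inductive step for condition (3) assumes exactly what must be proved, namely that $D_k$ is (a compactification of) a locally acyclic, full-rank cluster variety of dimension $d-1$, and in the one worked example we have (positroid varieties), establishing that the boundary strata are cluster varieties is a substantial theorem, not a formal consequence of ``freezing $x_k$.''

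Two further concrete problems with the construction itself. First, you have the variables backwards relative to the paper's convention: in $Y=\Spec(A)$ the \emph{frozen} variables are inverted, so the natural partial compactification un-inverts the frozen variables, and the candidate boundary divisors are their vanishing loci --- not loci $\{x_\alpha=0\}$ for arbitrary (mutable) cluster variables $x_\alpha$, of which there are typically infinitely many. Even after un-inverting the frozens one obtains only a partial compactification (an affine or quasi-affine $X$), whereas the positive-geometry axioms need $X$ to admit no nonzero holomorphic top forms (e.g.\ to be projective and rational) for uniqueness of $\Omega$, and need $\overline{X_{>0}}$ to have a sensible face stratification; your gluing produces none of this. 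Second, the cluster charts do not in general cover $Y$ (there is a ``deep locus'' outside every cluster torus), so verifying that $\Omega=\bigwedge_i dx_i/x_i$ is regular and nonvanishing there, hence that $(X,X_{\geq 0})$ is genuinely simplex-like, requires an argument beyond mutation invariance on the torus charts. In short: the outline is a reasonable heuristic for why one believes the conjecture, but every step marked ``should'' is an open problem.
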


\cref{thm:Gr} is the prototypical example of this conjecture.  Indeed, the stratification of the Grassmannian induced by positroid cells are the open positroid varieties, which are shown to be cluster varieties in \cite{GLcluster}.

\subsection{Grassmann polytopes and amplituhedra}\label{sec:GrassP}
Let $Z: \R^n \to \R^{k+m}$ be a linear map and assume that $n \geq k+m$.  Then generically, for a $k$-dimensional subspace $V \subset \R^n$, we have that $Z(V) \subset \R^{k+m}$ is again $k$-dimensional.  This induces a rational map $Z: \Gr(k,n) \dashrightarrow \Gr(k,k+m)$.  Assuming that $Z$ is well-defined on $\Gr(k,n)_{\geq 0}$, the image $P = Z(\Gr(k,n)_{\geq 0})$ is a \emph{Grassmann polytope} \cite{CDM, Karp}.  Viewing $Z$ as a $n \times (k+m)$ matrix, we call $Z$ positive when all the $(k+m)\times (k+m)$ minors of $Z$ are positive.  In this case, $A_{n,k,m} = Z(\Gr(k,n)_{\geq 0})$ is known as the {\it amplituhedron}.

In the case $k = 1$, Grassmann polytopes are projective polytopes, and the amplituhedron is a cyclic polytope.  Thus the following conjecture holds for $k=1$.

\begin{conjecture}[\cite{ABL}] \label{conj:Grass}
  Grassmann polytopes and amplituhedra are positive geometries.
\end{conjecture}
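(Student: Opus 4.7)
The plan is to construct the canonical form of a Grassmann polytope $P = Z(\Gr(k,n)_{\geq 0}) \subset \Gr(k,k+m)$ by pushforward from the totally nonnegative Grassmannian, in analogy with the way $\Omega(P)$ of a projective polytope arises from a simplicial subdivision (\cref{thm:Ptriang}, \cref{thm:Ppush}). I would take $X$ to be the Zariski closure of the image of the rational map $Z: \Gr(k,n) \dashrightarrow \Gr(k,k+m)$ and set $X_{\geq 0} := P$. Since $\Gr(k,n)$ has no nonzero holomorphic top forms, neither does $X$, so uniqueness of any candidate canonical form is automatic; the content of the conjecture lies in existence and in matching residues on the (a priori unknown) boundary components $C_i$.

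The main construction would proceed via a \emph{positroid triangulation} of $P$: a collection $\mathcal{T}$ of top-dimensional positroid cells $\oPi_\alpha \subset \Gr(k,n)_{>0}$ such that $Z$ restricts to a diffeomorphism on each $\oPi_\alpha$ and the closures of the images tile $P$ with pairwise intersections of positive codimension. For the physical amplituhedron $A_{n,k,4}$, such a triangulation is given by a BCFW tiling, whose validity was recently established by Even-Zohar, Lakrec, Parisi, Sherman-Bennett, and Tessler. Assuming a triangulation is available, define
$$
\Omega := \sum_{\alpha \in \mathcal{T}} (Z|_{\Pi_\alpha})_* \Omega(\Pi_\alpha, (\Pi_\alpha)_{\geq 0}),
$$
where the canonical form on each closed positroid variety exists by \cref{thm:Gr} applied to the face $(\Pi_\alpha, (\Pi_\alpha)_{\geq 0})$ of $(\Gr(k,n), \Gr(k,n)_{\geq 0})$.

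To verify the axioms of \cref{def:PG}, I would induct on $d = \dim X$. The expectation is that each boundary component $C_i$ of $P$ is itself a lower-dimensional Grassmann polytope, arising as the $Z$-image of an appropriate projected positroid boundary stratum, hence a positive geometry by induction. The identity $\Res_{C_i} \Omega = \Omega(C_i, C_{i,\geq 0})$ should then follow from commuting residues with proper pushforwards, applied termwise, together with the corresponding residue identity for $\Omega(\Pi_\alpha)$ at the positroid level. The base case of the induction is the observation that for $k=1$, amplituhedra are cyclic polytopes, reducing to \cref{thm:Pdef}.

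The hard part is controlling \emph{spurious boundaries}. When $\Omega(\Pi_\alpha)$ is pushed forward by $Z$, it can acquire poles along the $Z$-images of every positroid facet of $\Pi_\alpha$, including interior walls shared between adjacent tiles of $\mathcal{T}$ that lie in the interior of $P$ and therefore must not be poles of $\Omega$. The proof requires that these spurious poles cancel telescopically in the sum, in analogy with how interior facets of a simplicial subdivision cancel in \cref{thm:Ptriang}, and also that no new boundary components are created by the non-injectivity of $Z$ on the closed positroid boundary. Establishing this cancellation for general $(n,k,m)$ is subtle because $Z$ is typically not a local isomorphism on boundary strata and because the combinatorics of tile adjacency in BCFW-type triangulations is intricate; moreover, for Grassmann polytopes beyond the amplituhedron, even the existence of a positroid triangulation is open, so one would likely have to establish it in tandem with the conjecture. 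This spurious-pole cancellation is, in my view, the principal obstacle between the outline above and a complete proof.
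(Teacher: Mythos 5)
The statement you are trying to prove is stated in the paper as an open conjecture (\cref{conj:Grass}); the paper offers no proof, and in fact explicitly warns that the conjecture ``may require extending the notion of positive geometry to allow for boundary components to be formal sums of positive geometries'' (citing \cite{DHS}). So there is no argument in the paper to compare yours against, and your text is an outline of the standard strategy rather than a proof. You identify the right ingredients --- positroid tilings, pushforward of the positroid canonical forms under $Z$, and cancellation of spurious poles --- and you are candid that the cancellation is unresolved. That candor is warranted: \cref{thm:triang} only asserts additivity of canonical forms \emph{given} that all the pieces and the ambient space are already known to be positive geometries, so it cannot be used to bootstrap the existence of $\Omega(P)$; and the pushforward statement you rely on (that $(Z|_{\Pi_\alpha})_*\Omega(\Pi_\alpha)$ is the canonical form of the image tile) is precisely the ``pushforward heuristic'' of \cite{ABL}, which the paper presents as a heuristic, proved only in special cases such as \cref{thm:Ppush}.

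Beyond the gap you acknowledge, there is a second one in your inductive setup. You assume each boundary component $C_i$ of $P$ is again a Grassmann polytope, i.e.\ the $Z$-image of a projected positroid stratum, so that induction on dimension applies. This is not known, and for (loop) amplituhedra it is expected to fail in the naive sense: the boundary stratification of $Z(\Gr(k,n)_{\geq 0})$ is not simply the image of the positroid stratification, distinct positroid boundary strata can map onto the same hypersurface with different multiplicities or orientations, and internal boundaries can force the residue along a component $C_i$ to be a signed \emph{sum} of forms rather than a single canonical form --- which is exactly why \cite{DHS} proposes enlarging \cref{def:PG}. So even granting a tiling (known for $m=4$ BCFW tilings by \cite{ELT}, open for general Grassmann polytopes) and the spurious-pole cancellation, your induction hypothesis is not available in the form you state it. The proposal is a reasonable research program, but each of its three pillars --- existence of tilings, validity of the pushforward/cancellation, and the structure of boundary components --- is an open problem, not a step that can currently be carried out.
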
  

The (conjectural) canonical form of the amplituhedron $A_{n,k,m}$ is very well-studied due to its relation to super Yang-Mills amplitudes \cite{AT}.  Among the remarkable properties the amplituhedron canonical form satisfies, let us mention the conjectural positivity \cite{AHTpos} and the behavior under parity duality \cite{GLparity}. 

Grassmann polytopes give a wealth of examples of positive geometries that are not simplex-like.  Some other (potential) examples are naive positive parts of flag varieties \cite{BK,BHL}, higher-loop amplituhedra \cite{ABL,AT2}, and momentum amplituhedra \cite{DFLP}.  These examples, as well as \cref{conj:Grass}, may require extending the notion of positive geometry to allow for boundary components to be formal sums of positive geometries; see the recent paper \cite{DHS}.  

\section{Combinatorics, topology, etc.}
We pose a number of questions on various features positive geometries. 

\subsection{Positive topology}
A positive geometry $X_{\geq 0}$ is \emph{connected} if $X_{>0}$ is a connected manifold and all boundary components $(C,C_{\geq 0})$ are connected positive geometries.  Note that this is stronger than the condition that $X_{>0}$ is connected as a topological space.  For simplicity, we restrict our discussion to connected positive geometries.

\begin{problem}  What can we say about the topology of $X_{\geq 0}$ and $X_{>0}$ for connected positive geometries?
\end{problem}

Polytopes, positive parts of toric varieties, totally positive parts of flag varieties, and the positive part of the moduli space of $n$ points are all examples of connected positive geometries.    In all these cases, 
\begin{enumerate}
  \item $X_{>0}$ is homeomorphic to an open ball, and $X_{\geq 0}$ is homeomorphic to a closed ball of the same dimension;
  \item the face poset of $X_{\geq 0}$ is a Eulerian and shellable;
  \item the face stratification of $X_{\geq 0}$ endows it with the structure of a regular CW-complex.
\end{enumerate}

For polytopes, these statements are well-known.  The toric variety case and the moduli space case are both diffeomorphic to polytopes.  For the totally positive flag varieties, see \cite{GKL,GKL2} for (1), \cite{Wil} for (2), and \cite{GKL3} for (3).  It would be interesting to establish (1),(2),(3) for connected Grassmann polytopes.  See also \cite{BGPZ}.


\subsection{Complex topology}
\def\oX{{\mathring{X}}}
Let $(X,X_{\geq 0})$ be a positive geometry, and let $\oX:= X \setminus \partial X$ be the complex algebraic ``open stratum'' of the positive geometry.  For simplicity, we assume that $\oX$ is a smooth complex algebraic variety.  

Since $\Omega(P)$ has poles only along $\partial X$, it is holomorphic on $\oX$.  Furthermore, $\Omega(P)$ is a holomorphic top-form and thus closed.  It therefore defines a class $[\Omega(P)] \in H^d(\oX) = H^d(\oX,\C)$ in the deRham (and therefore singular) cohomology of $\oX$.

\begin{problem}
  What can we say about the topology of $\oX$ and what can we say about the class $[\Omega(P)] \in H^d(\oX)$?
\end{problem}

\begin{enumerate}
  \item[(a)] 
For $X_{\geq 0} = P$ a polytope, $\oX$ is a hyperplane arrangement complement.  
\item[(b)] 
For $X_{\geq 0} = X_{P,\geq 0}$ the positive part of a toric variety, $\oX$ is a complex algebraic torus.  
\item[(c)] For $X_{\geq 0} = (G/P)_{\geq 0}$ a totally nonnegative flag variety, $\oX$ is the top open projected Richardson stratum. 
\end{enumerate}
The cohomology of hyperplane arrangement complements are well-understood from both geometric and combinatorial perspectives \cite{OT}.
The cohomologies of open Richardson varieties were studied in \cite{GLCatalan}.  In the case of an open positroid variety, or more generally a Grassmann polytope or amplituhedron, the space $\oX$ could be considered a Grassmannian variant of a hyperplane arrangement complement.  These cohomologies are especially interesting in the positroid case, where they are related to $q,t$-Catalan numbers.

Curiously, in the simplex-like positive geometry discussed in \cref{sec:examples}, we have $\dim H^d(\oX) = 1$, and the cohomology group is spanned by $[\Omega(P)]$.  

\begin{enumerate}
  \item[(a)] 
For $X_{\geq 0} = P$ a simplex, we have $\oX = (\C^*)^d$ which deformation retracts to a torus $(S^1)^d$, whose cohomology is well-known.  For \cref{fig:dimtwo}(a), the formula $H^d(\oX) = \C \cdot [\Omega(P)]$ can be proven, for example, using the Gysin long exact sequence.
\item[(b)] 
For the toric variety case, we again have $\oX = (\C^*)^d \simeq (S^1)^d$.
\item[(c)] For $\oX$ an open projected Richardson variety, the dimension $\dim(H^d(\oX))=1$ is given in \cite[Proposition 8.6]{HLZ}.  Let us show that $[\Omega(X_{\geq 0})] \neq 0$ inside $H^d(\oX)$.  In the notation of \cref{sec:appendix}, for $[v',w']_P \lessdot [v,w]_P$, we have a Gysin exact sequence 
$$
\cdots \to H^d(\oPi_{[v,w]_P} \cup \oPi_{[v',w']_P}) \to H^d(\oPi_{[v,w]_P}) \stackrel{\Res}{\longrightarrow} H^{d-1}(\oPi_{[v',w']_P}) \to \cdots.
$$
By induction on dimension we may assume that $[\Omega((\Pi_{[v',w']_P})_{\geq 0})]$ spans $H^{d-1}(\oPi_{[v',w']_P})$; then \eqref{eq:Res} in \cref{sec:appendix} shows that $[\Omega((\Pi_{[v,w]_P})_{\geq 0})]$ spans $H^{d-1}(\oPi_{[v,w]_P})$.
\end{enumerate}

The fact that $H^d(\oX)$ is spanned by $[\Omega(P)]$ also suggests a relation to mirror symmetry \cite{LT,HLZ}.  See also \cite{LS} for a discussion of the cluster variety case.

On the other hand, for a general polytope, or a positive geometry that is not simplex-like, $H^d(\oX)$ can be large.

\subsection{Triangulations}
In \cref{sec:poly}, we give many formulae for the canonical form of a polytope.  A fundamental problem in positive geometries is to give (similar or otherwise) formulae for canonical forms of positive geometries.

A collection $(X_1,X_{1,\geq 0}), (X_2,X_{2,\geq 0}), \ldots,(X_r,X_{r,\geq 0})$ of positive geometries is a \emph{subdivision} of a positive geometry $(X,X_{\geq 0})$ if the interiors $X_{i,>0}$ are pairwise disjoint and $\bigcup_i X_{i,\geq 0} = X_{\geq 0}$.  (This is weaker than the usual notion of subdivion for polytopes.)  \cref{eq:triang} generalizes to positive geometries.

\begin{thm}[\cite{ABL}] \label{thm:triang}
  If $(X_1,X_{1,\geq 0}), (X_2,X_{2,\geq 0}), \ldots,(X_r,X_{r,\geq 0})$ subdivides $(X,X_{\geq 0})$ then $\Omega(X_{\geq 0})= \sum_i \Omega(X_{i,\geq 0})$.
\end{thm}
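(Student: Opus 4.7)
My plan is to prove the statement by induction on the dimension $d$, using the uniqueness clause of \cref{def:PG}. Specifically, I will show that the rational form $\Omega' := \sum_{i=1}^r \Omega(X_{i,\geq 0})$ satisfies the recursive axioms that characterize $\Omega(X_{\geq 0})$; uniqueness of the canonical form then forces $\Omega' = \Omega(X_{\geq 0})$. The base case $d=0$ is trivial since a $0$-dimensional subdivision must consist of a single point.

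For the inductive step, I would first classify the irreducible hypersurfaces along which some $\Omega(X_{i,\geq 0})$ can have a pole: each such hypersurface is either an \emph{external} boundary component $C$ of $X_{\geq 0}$ (i.e., a component of $\partial X$), or an \emph{internal} wall $D \subset X_{>0}$ that separates two pieces $X_i$ and $X_j$ of the subdivision. The key step is to verify that the residue of $\Omega'$ along each internal wall $D$ vanishes. Exactly two pieces $X_i$ and $X_j$ have $D$ as a boundary component, and the orientations on the two corresponding boundary positive geometries $(D, D_{i, \geq 0})$ and $(D, D_{j, \geq 0})$ induced from $X_{i,>0}$ and $X_{j,>0}$ are opposite, because $X_{i,>0}$ and $X_{j,>0}$ sit on opposite sides of $D$ while $X_{>0}$ is oriented consistently. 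Hence $\operatorname{Res}_D \Omega(X_{i,\geq 0}) + \operatorname{Res}_D \Omega(X_{j,\geq 0}) = \Omega(D_{\geq 0}) - \Omega(D_{\geq 0}) = 0$, where I am using the fact that $D_{i,\geq 0} = D_{j,\geq 0} = X_{i,\geq 0} \cap X_{j,\geq 0}$ as sets. Since each $\Omega(X_{i,\geq 0})$ has at most simple poles along $D$, vanishing of the total residue implies that $\Omega'$ has no pole along $D$ at all (writing $\eta_i + \eta_j$ as $f \cdot \tilde\eta$ for $f$ a local defining equation of $D$ absorbs the simple pole).

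Next, for an external boundary component $C$ of $X_{\geq 0}$, the pieces $X_i$ whose boundary meets $C$ induce a subdivision of $(C, C_{\geq 0})$ by lower-dimensional positive geometries $(C, C_{i,\geq 0})$, with orientations all inherited compatibly from $X_{>0}$. By the inductive hypothesis in dimension $d-1$, $\sum_i \Omega(C_{i,\geq 0}) = \Omega(C, C_{\geq 0})$, so
\[
\operatorname{Res}_C \Omega' \;=\; \sum_i \operatorname{Res}_C \Omega(X_{i,\geq 0}) \;=\; \sum_i \Omega(C_{i,\geq 0}) \;=\; \Omega(C, C_{\geq 0}).
\]
Combined with the previous paragraph, this shows $\Omega'$ has simple poles only along the boundary components of $X_{\geq 0}$ and correct residues there; uniqueness of the canonical form gives $\Omega' = \Omega(X_{\geq 0})$, completing the induction.

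The main obstacle is the residue-cancellation step on internal walls. What needs care is that the two pieces $X_i, X_j$ adjacent to an internal wall $D$ really do induce opposite orientations on $D$, and that the residues on $D$ of $\Omega(X_{i,\geq 0})$ and $\Omega(X_{j,\geq 0})$ are genuinely canonical forms of the \emph{same} boundary positive geometry (with opposite orientations); I would appeal to the convention, noted in the remark following \cref{def:PG}, that boundary orientations are induced from the ambient positive part. One should also verify that no more than two pieces meet along a codimension-one internal stratum, which follows from the hypothesis that the $X_{i,>0}$ are pairwise disjoint open manifolds whose closures cover $X_{\geq 0}$.
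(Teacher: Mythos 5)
The paper does not actually prove \cref{thm:triang} -- it is quoted from \cite{ABL} -- so your proposal can only be judged against the standard argument there, and your overall strategy (show that $\Omega' = \sum_i \Omega(X_{i,\geq 0})$ satisfies the axioms of \cref{def:PG} and invoke uniqueness, with residues cancelling on internal walls and adding up on the boundary by induction on dimension) is indeed that argument. The base case, the residue bookkeeping on a genuinely two-sided wall, and the observation that a vanishing residue kills a simple pole are all fine.

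The gap is in your classification and in the claim that ``exactly two pieces meet along $D$, with $D_{i,\geq 0}=D_{j,\geq 0}$.'' The objects along which residues are taken are irreducible hypersurfaces (Zariski closures of codimension-one boundary strata), and a single such hypersurface $D$ need not be purely internal or purely external, nor adjacent to only two pieces: think of an L-shaped polygon cut into two rectangles by extending one of its edges, where the same line contains an external facet of $X_{\geq 0}$ on one segment and an internal wall on another, or a line that separates $X_1$ from $X_2$ along one segment and $X_3$ from $X_4$ along another. Your local argument (two sides of $D$ near a generic point of $D\cap X_{>0}$) does not settle this, because $\Res_D \Omega(X_{i,\geq 0}) = \Omega(D, D_{i,\geq 0})$ is a globally defined rational form on all of $D$, not something supported near $D_{i,\geq 0}$; so the required identity on $D$ is $\sum_i \pm\,\Omega(D, D_{i,\geq 0}) = \Omega(D, D_{\geq 0})$ (with $D_{\geq 0}$ possibly empty), where the $D_{i,\geq 0}$ overlap with opposite orientations on the internal segments. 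This is not an instance of the plain subdivision statement in dimension $d-1$, but of its \emph{signed} version, in which pieces may carry either orientation and only the net region survives. The fix is to strengthen the inductive hypothesis to signed subdivisions from the outset (this is exactly how \cite{ABL} phrases additivity, via signed triangulations); as written, your induction hypothesis is too weak to close the loop on such walls.
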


In the case of the amplituhedron, \cref{thm:triang} is the main tool used to construct the canonical form \cite{AT,ATT}.  This has led to much work on the triangulations of amplituhedra; see the recent works \cite{GLparity,PSW,ELT} and references therein.

\begin{conjecture}\label{conj:triang}
  Every positive geometry has a subdivision into positive geometries that are birationally simplex-like.
\end{conjecture}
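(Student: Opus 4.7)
The plan is to proceed by induction on $d = \dim X$. The base cases $d = 0, 1$ are trivial, as a point and a closed interval are each simplex-like. For the inductive step, one may first reduce to the case where $X$ is smooth with $\partial X$ a simple normal crossings divisor, using resolutions of singularities that leave $X_{>0}$ intact (in the spirit of \cref{rem:singular}); these manipulations preserve both the positive-geometry structure and birational simplex-likeness. By induction, every face $(C_i, C_{i, \geq 0})$ of $X_{\geq 0}$ already admits a subdivision into birationally simplex-like pieces.

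The key step is the construction of the subdivision in dimension $d$. I would take as a guide the polytope case: \cref{thm:Ppush} realizes any polytope $P$ as the pushforward image of a toric positive geometry under a map $\Phi$, and any triangulation of the source polytope with vertex set $\V$ gives a subdivision of $P$ into pieces, each of which is birationally simplex-like via its associated toric variety. The natural generalization is to cover $\mathring{X} = X \setminus \partial X$ by open toric charts --- for instance, cluster charts when a cluster structure on $\mathring{X}$ is available --- and to use the closures of the positive parts of these charts as the subdivision pieces. Each such toric chart compactifies to a normal, simplex-like positive geometry by the theorem of \cref{sec:toric}, so the subdivision pieces are birationally simplex-like by construction, and additivity of the canonical forms would then follow from \cref{thm:triang}.

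The main obstacle is the production of a suitable covering of $\mathring{X}$ by toric charts in the absence of a known cluster structure. Positive geometries such as higher-loop amplituhedra and momentum amplituhedra are not yet known to carry cluster structures, and the ``vertex'' data that makes \cref{thm:Ppush} work for polytopes has no obvious analog for positive geometries with curved boundary components (compare the rational polypols of \cref{fig:polypols}). A proof of the full conjecture therefore likely requires either establishing a general toric-atlas existence theorem for positive geometries, or, equivalently, constructing log Calabi--Yau compactifications of $X_{>0}$ with prescribed anticanonical boundary supported on $\partial X$, together with a matching between the combinatorial face stratification and the stratification induced by the anticanonical divisor --- a substantial problem in birational geometry in its own right.
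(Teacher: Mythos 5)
The first thing to note is that \cref{conj:triang} is stated in the paper as an open conjecture; there is no proof of it in the text, and your proposal does not supply one either --- as you yourself concede in your final paragraph, the ``key step'' is deferred to a substantial open problem in birational geometry. What you have written is a strategy sketch, not a proof, and it should be presented as such.

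Beyond that, two concrete problems. First, the one explicit mechanism you propose for producing the subdivision --- taking the closures of the positive parts of a covering of $\mathring{X} = X \setminus \partial X$ by toric (e.g.\ cluster) charts --- fails as stated: for a cluster variety the positive parts of all cluster tori coincide, because mutation is subtraction-free (this is exactly why the paper can define $Y_{>0}$ as ``the positive part of any cluster torus''). So every one of your proposed pieces would be all of $X_{\geq 0}$, and their interiors are as far from pairwise disjoint as possible; you get no subdivision at all. The triangulations that are actually known or conjectured --- e.g.\ BCFW triangulations of amplituhedra --- arise by a different mechanism: one subdivides a positive geometry \emph{upstairs} (a collection of positroid cells of $\Gr(k,n)_{\geq 0}$) and takes images under $Z$, in the spirit of \cref{thm:Ppush}; even there, showing the images are positive geometries forming a subdivision is the hard open part, and \cref{thm:triang} is a consequence one wants, not a tool that produces the pieces. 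Second, your induction on dimension does no work: knowing that each facet $(C_i, C_{i,\geq 0})$ admits a subdivision gives no handle on cutting up the $d$-dimensional set $X_{\geq 0}$ itself, and indeed the inductive hypothesis is never invoked in your main step. The accurate summary is that you have correctly identified why the conjecture is difficult, but the argument offered is not a proof and its one concrete construction does not produce a subdivision.
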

A subdivision as in \cref{conj:triang} is called a triangulation.  We believe the adjective ``birationally" is necessary because of the possibility of modifying $X$ by a blowup.    

\subsection{Adjoint}
Motivated by \cref{thm:Padjoint}, we make the following definition.  See \cite{KPRRSSST} for a detailed discussion.

\begin{definition}
The \emph{adjoint hypersurface} $A_{X_{\geq 0}}$ of a positive geometry $(X,X_{\geq 0})$ is the closure of the zero locus of $\Omega(X_{\geq 0})$.  
\end{definition}

Up to a scalar, determining the adjoint hypersurface is equivalent to determining the canonical form of a positive geometry.  Let $C_1,C_2,\ldots,C_r$ be the irreducible components of $\partial X$.  We produce a finite list of closed irreducible subvarieties of $X$ by repeatedly intersecting and taking irreducible components, starting with $C_1,\ldots,C_r$.  Define the \emph{residual arrangement} $\RR_{X_{\geq 0}}$ to be the collection of such subvarieties that do not intersect $X_{\geq 0}$.  


\begin{proposition}\label{prop:adj}
  Let $(X,X_{\geq 0})$ be a positive geometry and assume that all the varieties in the face stratification of $X_{\geq 0}$ are smooth.  Then the adjoint hypersurface $A_{X_{\geq 0}}$ of $X_{\geq 0}$ contains $\RR_{X_{\geq 0}}$.
\end{proposition}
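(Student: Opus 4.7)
The plan is to induct on $d = \dim X$. The base case $d = 1$ is vacuous, because every $C_i$ is an irreducible component of $\overline{\partial X_{\geq 0}}$ and so meets $X_{\geq 0}$, leaving $\RR_{X_{\geq 0}} = \emptyset$. For the inductive step, I would fix $Y \in \RR_{X_{\geq 0}}$ and let $\{C_{i_1}, \ldots, C_{i_m}\}$ be the boundary components of $X_{\geq 0}$ that contain $Y$; since $Y$ is built from iterated intersections of boundary components and $Y \cap X_{\geq 0} = \emptyset$, one must have $m \geq 2$. I would then pick a smooth point $p \in Y$ that is generic in the sense that $p$ lies in no other $C_j$ and in no Zariski closure of a face of $X_{\geq 0}$ that does not contain $Y$. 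The smoothness of the face stratification then furnishes local coordinates $(f_1, \ldots, f_m, g_1, \ldots, g_{d-m})$ around $p$ with $C_{i_\ell} = \{f_\ell = 0\}$ and $Y = \{f_1 = \cdots = f_m = 0\}$, and
\[
\Omega(X_{\geq 0}) = \frac{N}{f_1 \cdots f_m}\, df_1 \wedge \cdots \wedge df_m \wedge dg_1 \wedge \cdots \wedge dg_{d-m}
\]
for a local regular function $N$. Showing $Y \subseteq A_{X_{\geq 0}}$ is then equivalent to verifying that $N|_Y = 0$.

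Next I would take the residue along $C_{i_1}$ to obtain
\[
\Omega(C_{i_1, \geq 0}) = \frac{N|_{f_1 = 0}}{f_2 \cdots f_m}\, df_2 \wedge \cdots \wedge df_m \wedge dg_1 \wedge \cdots \wedge dg_{d-m}
\]
in the induced local coordinates on $C_{i_1}$. By genericity of $p$, any boundary component of $C_{i_1, \geq 0}$ through $p$ is an irreducible component of $C_{i_1} \cap C_j$ for some $C_j$ passing through $p$, and hence is locally of the form $\{f_\ell = 0\}$ for some $\ell \in \{2, \ldots, m\}$. Letting $S \subseteq \{2, \ldots, m\}$ index those $\ell$ that actually contribute a boundary component, I can also write $\Omega(C_{i_1, \geq 0}) = \frac{M}{\prod_{\ell \in S} f_\ell}\, df_2 \wedge \cdots \wedge df_m \wedge dg$ for a regular function $M$ with no factor $f_\ell$ ($\ell \in S$), and equating the two expressions yields
\[
N|_{f_1 = 0} \;=\; M \cdot \prod_{\ell \in \{2, \ldots, m\} \setminus S} f_\ell.
\]

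The argument then splits according to $S$. If $S \subsetneq \{2, \ldots, m\}$, the displayed product contains some factor $f_\ell$ that vanishes on $Y$, so $N|_Y = 0$ directly. Otherwise $S = \{2, \ldots, m\}$, in which case each $D_\ell := C_{i_1} \cap C_{i_\ell}$ is (locally at $p$) a boundary component of $C_{i_1, \geq 0}$. Since $Y$ is an irreducible component of $C_{i_1} \cap \cdots \cap C_{i_m}$ and $Y \subseteq \bigcap_{\ell=2}^m D_\ell \subseteq C_{i_1} \cap \cdots \cap C_{i_m}$, $Y$ is also an irreducible component of $\bigcap_\ell D_\ell$, and $Y \cap C_{i_1, \geq 0} \subseteq Y \cap X_{\geq 0} = \emptyset$ then places $Y$ inside $\RR_{C_{i_1, \geq 0}}$. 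Applying the inductive hypothesis to the $(d-1)$-dimensional positive geometry $(C_{i_1}, C_{i_1, \geq 0})$ yields $M|_Y = 0$, whence $N|_Y = 0$ once more.

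The main obstacle I anticipate is the local-structure step: verifying that at a sufficiently generic $p \in Y$ the boundary components of $C_{i_1, \geq 0}$ through $p$ really are in bijection with a subset of $\{C_{i_2}, \ldots, C_{i_m}\}$ and are locally cut out by the corresponding $f_\ell$. This is where the smoothness of the face stratification enters essentially, both to yield transverse intersections giving well-defined local equations and to let one set up the coordinate system above. Once this geometric groundwork is secure, the residue comparison and the case analysis on $S$ are short and formal.
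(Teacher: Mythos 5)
Your proposal is correct and follows essentially the same route as the paper's proof: induction on dimension, a residue comparison along a boundary component $C_{i_1}$ containing $Y$ in adapted local coordinates, and the same dichotomy (either some component of $C_{i_1}\cap C_{i_\ell}$ through the generic point fails to be a facet of $C_{i_1,\geq 0}$, forcing the numerator to vanish along it, or else $Y$ lies in $\RR_{C_{i_1,\geq 0}}$ and the inductive hypothesis applies). The one soft spot---your assertion that $Y$ is an irreducible component of $C_{i_1}\cap\cdots\cap C_{i_m}$, which need not hold for a general member of the iteratively constructed residual arrangement---is handled at the same informal level in the paper's own argument, so it does not constitute a gap relative to the source.
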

\begin{proof}
  Let $C$ be a boundary component of $(X,X_{\geq 0})$.
  We first show that $A_{C_{\geq 0}}$ is contained in $A_{X_{\geq 0}} \cap C$.  Using the smoothness assumption, we let $f_1,f_2,\ldots,f_d$ be local coordinates for $X$ such that $C$ is cut out by $f_1 = 0$ to order one.  Locally, $df_1 \wedge df_2 \wedge \cdots \wedge df_d$ is a non-vanishing top form, so we have
  $$
  \Omega(X_{\geq 0}) = \frac{A_{X_{\geq 0}}}{f_1 p_2 p_3 \cdots p_m} df_1 \wedge df_2 \wedge \cdots \wedge df_d,
  $$
  where $p_2,p_3,\ldots$ are regular functions whose vanishing sets are the other boundary components of $X$, and $A_{X_{\geq 0}}$ here denotes a regular function that vanishes along the adjoint hypersurface.  Then 
  \begin{equation}\label{eq:ResC}
  \Omega_{C_{\geq 0}}= \Res_{C}  \Omega(X_{\geq 0}) = \left(\frac{A_{X_{\geq 0}}}{p_2 p_3 \cdots p_m}\right) \bigg|_C df_2 \wedge \cdots \wedge df_d.
  \end{equation}
  The restriction to $C$ is obtained by setting $f_1= 0$.  After cancelling out common factors in the numerator and denominator, we deduce that the adjoint hypersurface $A_{C_{\geq 0}}$ is contained in $A_{X_{\geq 0}} \cap C$.
 
  Now let $Z \subset \RR_{X_{\geq 0}}$ be an irreducible component and let $C$ be a boundary component of $X$ that contains $Z$.  Let $D_1,D_2,\ldots,D_r$ be the irreducible components of $C_i \cap C_j$ for two boundary components of $(X,X_{\geq 0})$.  Since $X$ is smooth, all these subvarieties are codimension two \cite[\href{https://stacks.math.columbia.edu/tag/0AZL}{Tag 0AZL}]{stacksproject}.  Reindex so that $D_1,D_2,\ldots,D_p$ are the boundary components of $(C,C_{\geq 0})$.  Suppose first that $Z$ belongs to one of the subvarieties $D_{p+1}, \ldots, D_r$. Since $Z \subset C$, we may assume that $Z \subset D \subset C$ where $D$ is an irreducible component of $C \cap C'$ for $C'$ some other boundary component of $(X,X_{\geq 0})$.  But $D$ is not an irreducible component of $\partial C$, so $\Omega(C_{\geq 0})$ does not have a pole along $D$.  On the other hand, $\Omega(X_{\geq 0})$ has a pole along $C'$.  From \eqref{eq:ResC}, we see that this is only possible if $A_{X_{\geq 0}}$ vanishes along $D$.
  
 Next, suppose that $Z$ does not belong to any of $D_{p+1}, \ldots, D_r$.  Then $Z$ can be obtained by repeatedly intersecting and taking irreducible components of the subvarieties $D_1,D_2,\ldots,D_p$.  On the other hand, $Z \cap X_{\geq 0} = \emptyset$ which implies that $Z \cap C_{\geq 0} = \emptyset$, so by definition we have $Z \subset \RR_{C_{\geq 0}}$.  By induction on dimension, we conclude that $Z \subset A_{C_{\geq 0}}$, and since $A_{C_{\geq 0}} \subset A_{X_{\geq 0}} \cap C \subset A_{X_{\geq 0}}$, the result follows.
\end{proof}

We expect \cref{prop:adj} to hold without any smoothness assumption.

\begin{problem}\label{prob:res}
  When is $A_{X_{\geq 0}}$ characterized by the property of containing $\RR_{X_{\geq 0}}$?
\end{problem}

In the case that $X_{\geq 0}$ is one-dimensional, \cref{prob:res} has an affirmative answer only when $X_{\geq 0}$ is a closed interval.  When $X_{\geq 0}$ is a disjoint union of multiple intervals, the canonical form has a non-trivial zero set, but the residual arrangement is empty.

Note that even in the case of polytopes, the adjoint hypersurface is not always uniquely determined by its vanishing on the residual arrangement \cite{KR}.  

It would be especially interesting to study the residual arrangement of the amplituhedron.  

\subsection{Positive convexity and dual positive geometries}
A positive geometry $(X,X_{\geq 0})$ is called {\it positively convex} \cite{ABL} if $\Omega(X,X_{\geq 0})$ has constant sign on $X_{>0}$.  In other words, the poles and zeros of $\Omega(X,X_{\geq 0})$ do not intersect $X_{>0}$.  For a polytope $P$, the dual volume formula (\cref{thm:Pdual}) shows that the canonical form $\Omega(P)$ takes constant sign in the interior of the polytope. Thus polytopes are positively convex positive geometries.  \cref{fig:dimtwo}(a) and \cref{fig:polypols}(a,b) are positively convex geometries.  \cref{fig:dimtwo}(b) is not positively convex.  It is conjectured in \cite{ABL,AHTpos} that certain amplituhedra are positively convex geometries.

As suggested by \cref{thm:Pdual}, positive convexity indicates the potential existence of a \emph{dual positive geometry}.


\subsection{Integral functions}
Let us say only a brief word about the relation between positive geometries and scattering amplitudes which typically arises from taking integrals of the canonical form over positive geometries.
One typically considers integrals of the form
\begin{equation} \label{eq:integral}
\int_{X_{>0}} (\text{regulator}) \Omega(X_{\geq 0}) 
\end{equation}
Since $\Omega(X_{\geq 0})$ has poles along $\partial X_{\geq 0}$, some regulator in the integrand is necessary for the integral to converge.  Different regulators appear in different applications.

The motivating examples of \eqref{eq:integral} are integrals for scattering amplitudes in physics.  We refer the reader to \cite{HT,FL} for further discussion.

Let us mention two classes of examples that recover classical special functions.  The \emph{stringy canonical forms} of \cite{AHL} are obtained by taking the regulator to be a monomial $f_1^{s_1} \cdots f_r^{s_r}$ in a collection of rational functions $f_1,f_2,\ldots,f_r$, and viewing the integral as a function of the exponents $s_1,s_2,\ldots,s_r$.  Stringy canonical forms are so named because in the case $X_{\geq 0} = (\bM_{0,n})_{\geq 0}$ one obtains string theory amplitudes.  For $n=4$, one recovers the beta function, studied clasically by Euler and Legendre, as a special case.  Curiously, these integrals also appear as marginal likelihood integrals in algebraic statistics \cite{ST}.

Another class of examples appear in mirror symmetry and the theory of geometric crystals \cite{Rie,LamGB,LT}.  In this case, one takes the regulator to be $\exp(f)$ for a rational function $f$ called the \emph{superpotential}.  When $(X,X_{\geq 0}) = (\P^1,\Delta^1)$, this integral recovers the Bessel function, studied classically by Bernoulli, Bessel, and others.

\appendix
\section{Proof of \cref{thm:GP}}\label{sec:appendix}

Write $[v',w']_P \leq [v,w]_P$ for the partial order on projected Richardson varieties, and let $[v',w']_P \lessdot [v,w]_P$ denote a cover relation.  Thus $(\Pi^{w'}_{v'})_{\geq 0}  \subseteq (\Pi^w_v)_{\geq 0}$ for $[v',w']_P \leq [v,w]_P$.  We generally follow the notations of \cite{KLS2,GKL3}.

The projected Richardson variety $\Pi_v^w$ is irreducible, normal, and of dimension $\ell(w)-\ell(v)$ \cite{KLS2}.  Write $\partial \Pi_v^w$ for the union of all projected Richardson varieties contained in $\Pi_v^w$ but of smaller dimension.  The irreducible components of $\partial \Pi_v^w$ are called the facets of $\Pi_v^w$.
In \cite[Section 5]{KLS2}, it is shown that $\partial \Pi_v^w$ is an anticanonical divisor in $\Pi_v^w$.  It follows that there is a rational top-form $\omega_v^w$, unique up to scalar, that is holomorphic and nonzero on $\oPi_v^w$, with simple poles along the facets of $\Pi_v^w$.  Furthermore, for $[v',w']\lessdot[v,w]$, we have
\begin{equation}\label{eq:Res1}
\Res_{\Pi_{v'}^{w'}} \omega_v^w = \omega_{v'}^{w'} \text{ up to scalar.}
\end{equation} 

To prove \cref{thm:GP}, we define a top-form $\Omega_v^w$ that is a scalar multiple of $\omega_v^w$ and that satisfies the defining recursion for the canonical form of $(\Pi_v^w, (\Pi_v^w)_{\geq 0})$.  Each $\oPi_v^w$ is isomorphic to an open Richardson variety in the full flag variety $G/B$, and it is enough to consider the $G/B$ case.  Then the strata are indexed by intervals $[v,w]$, $v \leq w$ in Bruhat order.

By \cite{GKL3}, $(G/B)_{\geq 0}$ is a stratified closed ball.  We may pick an orientation on the closed ball $(G/B)_{\geq 0}$ that is compatible with orientations on each of the strata.  Thus it is enough to define $\Omega_v^w$ up to sign: the sign can be fixed by requiring that $\Omega_v^w$ induces the chosen orientation.

Let $v \leq w$ and $\w$ be a reduced word for $w$.  Let $\v$ be the positive distinguished subexpression of $\w$, that is, $\v$ is the rightmost subword of $\w$ that is a reduced word for $v$.  Let $J \subset [1,\ell(w)]$ denote the indices not belonging to the subword $\v$.  For $\t \in (\C^*)^{|J|}$, we have a group element (see \cite{GKL3} for notation)
\begin{equation}\label{eq:g}
g_{\v,\w}(\t) = g_1 g_2 \cdots g_n \in G
\end{equation}
where $g_k = \dot s_{i_k}$ or $g_k = y_{i_k}(t_k)$ depending on whether $k \notin J$ or $k \in J$.  The map $\t \mapsto g_{\v,\w}(\t)B \in G/B$ defines a rational parametrization of $\Pi_v^w$.  Define the rational form
$$
\Omega_v^w = \pm \bigwedge_{k \in J} \frac{dt_k}{t_k}
$$
on $\Pi_v^w$.  It is not immediately clear that $\Omega_v^w$ is holomorphic on $\oPi_v^w$.  By \cite[Proof of Proposition 7.2]{Rie} and \cite[Proof of Proposition 5.4]{GKL3}, up to sign the form $\Omega_v^w$ does not depend on the choice of $\w$.

We shall show that 
\begin{align}
&\mbox{$\Omega_v^w$ is equal to $\omega_v^w$ up to a scalar.} \label{eq:uscalar}\tag{$\star$}\\
& \Res_{\Pi_{v'}^{w'}} \Omega_v^w = \pm \Omega_{v'}^{w'} \text{ for }[v',w'] \lessdot [v,w] \label{eq:Res}\tag{$\star\star$}
\end{align}
which together proves \cref{thm:GP}.

By \cite[Proposition 2.11]{LamGB}, \eqref{eq:uscalar}
 holds for $(v,w) = (e,w_0)$.  Now let $(v,w)= (e,w)$.  Then we can choose $\wo = \w_1 \w_2$ where $\w_2$ is a reduced word for $w$.  Then $g_{e,\wo}(\t) = g_1(t_1,\ldots,t_r) g_2(t_{r+1},\ldots,t_{\ell})$ for $\ell= \ell(w_0)$ and $r = \ell(w_0)-\ell(w)$, and $g_2 = g_{e,\w}$ parametrizes $\Pi_e^w$.  Thus $\Omega_1^w$ is obtained from $\Omega_1^{w_0}$ by repeatedly taking residues $\Res_{t_k=0} \circ \cdots \circ \Res_{t_2=0} \circ \Res_{t_1=0}$.  By \eqref{eq:Res1}, this proves \eqref{eq:uscalar} for the $v=e$ case.  Similarly, when $(v,w) = (v,w_0)$, we pick $\wo = \w_1 \w_2$ where $\w_2$ is a reduced word for $v$.  A calculation shows that in the parametrization given by the group element \eqref{eq:g}, $\lim_{t_k \to \infty}$ sends $y_{i_k}(t_k)$ to $\dot s_{i_k}$.  Then $\Omega_v^{w_0}$ is obtained from $\Omega_e^{w_0}$ by repeatedly taking residues $\cdots \circ \Res_{t_{\ell-1}=\infty} \circ \Res_{t_\ell=\infty}$.  By \eqref{eq:Res1}, this proves \eqref{eq:uscalar} for the $w=w_0$ case.

 Next, suppose that we have $[v',w'] \lessdot [v,w]$ where $v,w$ are arbitrary.  Then either (a) $w'=w$ or (b) $v'=v$.  Suppose we are in case (a).  If $w'=w = w_0$, we may pick $\wo= \w_1\w_2$ where $\w_2$ is a reduced word for $v'$.  Then \eqref{eq:Res} follows as before.  Otherwise if $w \neq w_0$, we let $w_0 = u w$ and left multiply the parametrization $g_{\v,\w}(\t)$ by $y_{i_1}(a_1)\cdots y_{i_r}(a_r)$ where $u= s_{i_1} \cdots s_{i_r}$ is reduced, obtaining a parametrization $g_{\v,\wo}$, and similarly for $g_{\v',\wo}$.  Using the definition of residue, we have
 $$
\bigwedge_i \frac{da_i}{a_i} \wedge \Omega_{v'}^w = \pm \Omega_{v'}^{w_0}= \pm \Res_{\Pi^{w_0}_{v'}} \Omega_v^{w_0} =  \pm \Res_{\Pi^{w_0}_{v'}} \left(\bigwedge_i \frac{da_i}{a_i} \wedge \Omega_v^w\right) =  \pm  \bigwedge_i \frac{da_i}{a_i} \wedge \Res_{\Pi^{w}_{v'}} \Omega_v^w.
$$ 
Wedging with $\bigwedge_i \frac{da_i}{a_i}$ is injective for forms on $\oPi_{v'}^w$, so we deduce that \eqref{eq:Res} holds in case (a).  By \eqref{eq:Res1}, this implies that \eqref{eq:uscalar} holds for $[v',w]$ if it holds for $[v,w]$.  Since we have shown \eqref{eq:uscalar} for $(v,w) = (e,w)$, we deduce that \eqref{eq:uscalar} holds for all $[v,w]$.
   
There is an automorphism $\phi:G/B \to G/B$ that takes $\oPi_v^w$ to $\oPi^{vw_0}_{ww_0}$ for each $[v,w]$; see \cite{LusztigSpringer}.  Since $\Omega_v^w$ satisfies \eqref{eq:uscalar}, we have that $\phi^*(\Omega_v^w)$ is a scalar multiple of $\Omega_{ww_0}^{vw_0}$.  This scalar multiple must be $\pm 1$, since both $\phi^*(\Omega_v^w)$ and $\Omega_{ww_0}^{vw_0}$ have unit residues on some $0$-dimensional strataum $\Pi_u^u$.  Thus the validity of \eqref{eq:Res} for case (b) follows from that for case (a).  We are done.

\end{document}